\newtheorem{theorem}{Theorem}[section]
\newtheorem{lemma}[theorem]{Lemma}
\newtheorem{proposition}[theorem]{Proposition}
\newtheorem{corollary}[theorem]{Corollary}
\theoremstyle{definition}
\theoremstyle{remark}
\numberwithin{equation}{section}
\newcommand{\Hu}{\mathbf{H}}
\newcommand{\Hy}{\mathbf{G}}
\newcommand{\Aut}{\mathrm{Aut}\,}
\newcommand{\cyc}{\mathsf{cyc}}
\newcommand{\Pbar}{\mathsf{Pic}}
\newcommand{\ld}{\ldots}
\newcommand{\cd}{\cdots}
\newcommand{\pr}{\prime}
\newcommand{\pp}{p^{\perp}}
\newcommand{\Pp}{P^{\perp}}
\newcommand{\bfp}{{\bf p}}
\newcommand{\bfpp}{{\bf p^{\perp}}}
\newcommand{\bbZ}{\mathbb{Z}}
\newcommand{\cM}{\mathcal{M}}
\newcommand{\cA}{\mathcal{A}}
\newcommand{\cH}{\mathcal{H}}
\newcommand{\cF}{\mathcal{F}}
\newcommand{\cU}{\mathcal{U}}
\newcommand{\cD}{\mathcal{D}}
\newcommand{\cS}{\mathcal{S}}
\newcommand{\cT}{\mathcal{T}}
\newcommand{\cC}{\mathcal{C}}
\newcommand{\cB}{\mathcal{B}}
\newcommand{\cP}{\mathcal{P}}
\newcommand{\bx}{\mathbf{x}}
\newcommand{\bp}{\mathbf{p}}
\newcommand{\bq}{\mathbf{q}}
\newcommand{\bc}{\mathbf{c}}
\newcommand{\al}{\alpha}
\newcommand{\be}{\beta}
\newcommand{\ga}{\gamma}
\newcommand{\ka}{\kappa}
\newcommand{\Ga}{\Gamma}
\newcommand{\la}{\lambda}
\newcommand{\La}{\Lambda}
\newcommand{\Up}{\Upsilon}
\newcommand{\si}{\sigma}
\newcommand{\de}{\delta}
\newcommand{\De}{\Delta}
\newcommand{\Th}{\Theta}
\newcommand{\Om}{\Omega}
\newcommand{\sipr}{\si^{\pr}}
\newcommand{\whPsi}{\widehat{\Psi}}
\begin{document}

\title[Contents of partitions and permutation factorizations]{Contents of partitions and the combinatorics of permutation factorizations in genus $0$} 

\date{\today}


\author{S. R. Carrell}
\address{Dept. of Combinatorics and Optimization, University of Waterloo, Canada}
\curraddr{}
\email{srcarrel@uwaterloo.ca, ipgoulde@uwaterloo.ca}

\author{I. P. Goulden}
\thanks{The work of IPG was supported by an NSERC Discovery Grant.}

\keywords{generating functions, transitive permutation factorizations, symmetric functions, Jucys-Murphy elements, contents of partitions}

\subjclass[2010]{Primary  05A15; Secondary 05A05, 05E05, 14E20}

\begin{abstract} 
The central object of study is a formal power series that we call the {\em content series}, a symmetric function involving an arbitrary underlying formal power series $f$ in the contents of the cells in a partition. In previous work we have shown that the content series satisfies the KP equations. The main result of this paper is a new partial differential equation for which the content series is the unique solution, subject to a simple initial condition. This equation is expressed in terms of families of operators that we call $\cU$ and $\cD$ operators, whose action on the Schur symmetric function $s_{\la}$ can be simply expressed in terms of powers of the contents of the cells in $\la$. Among our results, we construct the $\cU$ and $\cD$ operators explicitly as partial differential operators in the underlying power sum symmetric functions. We also give a combinatorial interpretation for the content series in terms of the Jucys-Murphy elements in the group algebra of the symmetric group. This leads to an interpretation for the content series as a generating series for branched covers of the sphere by a Riemann surface of arbitrary genus $g$. As particular cases, by suitable choice of the underlying series $f$, the content series specializes to the generating series for three known classes of branched covers: Hurwitz numbers, monotone Hurwitz numbers, and $m$-hypermap numbers of Bousquet-M\'elou and Schaeffer. We apply our pde to give new and uniform proofs of the explicit formulas for these three classes of numbers in genus $0$.
\end{abstract}

\maketitle

\section{Introduction and the content series}\label{sec1}

A $k$-tuple of positive integers $\al = (\al_1,\ld ,\al_k)$ with $\al_1 \ge \cd \ge \al_k \ge 1$ and $\al_1 + \ld + \al_k = n$ is called a (integer) {\em partition} of $n$ with $k$ {\em parts}. We use the notation $\al\vdash n$, $|\al|=n$ and $l(\al )=k$ throughout the paper. If $\al$ has $m_i$ parts equal to $i$ for each $i\ge 1$, then we may also write $\al =1^{m_1}2^{m_2}\cd$. The set of all partitions, including a single partition of $0$ with $0$ parts, is denoted by $\cP$.

We will use various results for symmetric functions in the countable set of variables $\bx = (x_1,x_2,\ld )$, and refer the reader to~\cite{mac} for proofs and further details.  The $i$th {\em power sum} symmetric function is given by $p_i=\sum_{j\ge 1} x_j^i$, $i\ge 1$, and $p_0=1$. We write $\bp = (p_1,p_2,...)$, and $p_{\al} =p_{\al_1}p_{\al_2}\cd$, where $\al$ is a partition with parts $\al_1,\al_2, \ld$ (and in general write $g_{\al}=g_{\al_1}g_{\al_2}\cd$ for any $g_i$, $i\ge 1$). The {\em Schur functions} $s_{\la}$ for partitions $\la\in\cP$ are related to the $p_{\al}$ by the inverse linear relations
\begin{equation}\label{Schurpower}
s_{\la}=\sum_{\al \vdash n} \frac{| \cC_{\al} |}{n!} \chi^{\la}_{\al} p_{\al},\quad  \la \vdash n,\qquad \qquad \qquad p_{\al}=\sum_{\la \vdash n} \chi^{\la}_{\al} s_{\la},\quad \al \vdash n,
\end{equation}
where $\cC_{\al}$ is the conjugacy class in $\cS_n$ consisting of all permutations with disjoint cycle lengths specified by the parts of $\al$, and $\chi^{\la}_{\al}$ is the value of the irreducible character $\chi^{\la}$ of $\cS_n$ indexed by $\la$, evaluated at any element of the conjugacy class $\cC_{\al}$. We will use the nonstandard notation $s_{\la}(\bp)$ to denote the expression for $s_{\la}$ in terms of $\bp$ given in (\ref{Schurpower}) above (instead of writing $s_{\la}(\bx)$ to denote the symmetric function $s_{\la}$ in the underlying variables $\bx$ in the more standard way). 
The Hall inner product for symmetric functions is defined for the basis of Schur functions by $\langle s_{\la},s_{\mu} \rangle = \de_{\la ,\mu}, \; \la,\mu \in \cP$. For any left linear operator $\cA$ on symmetric functions, we define $\cA^{\perp}$ to be the adjoint operator, so we have
\begin{equation}\label{defnadj}
\langle \cA f,g \rangle = \langle f, \cA^{\perp} g \rangle
\end{equation}
for any symmetric functions $f,g$. It is straightforward to check that $\pp_i=i \frac{\partial}{\partial p_i}$, $i\ge 1$ and we let $\bfpp =(\pp_1,\pp_2,\ld )$. 

Throughout this paper we will consider the combinatorial quantity called {\em content}, that is associated with the diagram of a partition. A cell $\Box$ in the diagram of a partition that occurs in row $i$ (indexed from the top) and column $j$ (indexed from the left) has content $j-i$, written $c(\Box)=j-i$. For example, the left hand side of Figure~\ref{Figure.contentDef} is a diagram of the partition $(5,3,3,1)$ with the value of the content in each of its cells. We use the notation $\Box \in \la$, say in the range of a product or summation, to mean that $\Box$ ranges over all cells in the diagram of the partition $\la$. We will also encounter {\em skew} partitions $\la / \mu$, where $\la$ and $\mu$ are
partitions such that $\la_i \geq \mu_i$ for all $i \geq 1$. The cells in $\la / \mu$ are those which are contained in $\la$ but not in $\mu$; however, we shall view the cells in the
skew partition as being drawn on the integer lattice so that the content of any cell in the skew partition $\la / \mu$ is the same as its content in $\la$. For example, the right hand side of Figure~\ref{Figure.contentDef} is a diagram of the skew partition $(5,3,3,1)/(5,2,1)$ along with the content of each of its cells. We define $\bc(\la / \mu)$ to be the multiset of contents of cells in $\la / \mu$.

\begin{figure}[h]
	\centering
	\includegraphics[width=0.7\textwidth]{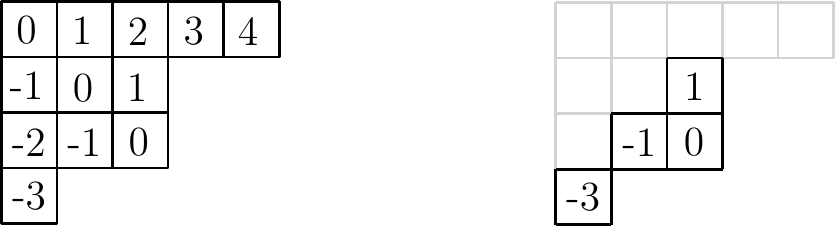}
	\caption{Content of cells in a partition and skew partition}
	\label{Figure.contentDef}
\end{figure}

The main object of study in this paper is the formal power series
\begin{equation}\label{defPhifseries}
\Phi^{f(x)}(y,z,\bp)  = \sum_{n\ge 0} \frac{z^n}{n!} \sum_{\la \vdash n} \chi^{\la}_{(1^n)} s_{\la} (\bp) \prod_{ \Box \in \la} f(yc(\Box)),
\end{equation}
in variables $y, z, \bp$, where $f(x)$ is an arbitrary formal power series in a single variable $x$ (used as a place holder only, to avoid confusion with $y,z$, etc.). We will refer generically to $\Phi^{f(x)}$ as the {\em content series} throughout this paper. The related series
\begin{equation*}
\Up^{f(x)}(y,\bp,\bq)  = \sum_{n\ge 0} \sum_{\la \vdash n} s_{\la}(\bq) s_{\la} (\bp) \prod_{ \Box \in \la} f(yc(\Box)),
\end{equation*}
in variables $y,\bp,\bq =(q_1,q_2,\ld)$ has appeared previously. In particular, $\Up$ was shown in~\cite{c1} to be a $\tau$-function for the $2$-Toda hierarchy, which in turn means that it satisfies a countable set of pdes in which partial differentiation is applied both in the elements of $\bp$ and $\bq$. The series $\Up$ is also a $\tau$-function for the KP hierarchy (see, e.g., \cite{gj2}), in which partial differentiation is applied in one of $\bp$ or $\bq$ only.

The content series $\Phi^{f(x)}$ can be obtained from $\Up^{f(x)}$ by the substitution $q_1=z$, $q_i=0$, $i\ge2$, since
\[ s_{\la}(z,0,\ld)= \frac{z^n \chi^{\la}_{(1^n)}}{n!},\qquad \la \vdash n .\]
The fact that $\Phi^{f(x)}$ is obtained from $\Up^{f(x)}$ by substitution for $\bq$ implies that $\Phi^{(f(x)}(y,z,\bp)$ is a $\tau$-function for the KP hierarchy, though it is not a $\tau$-function for the $2$-Toda hierarchy. 

A key combinatorial element of the paper is provided by the symmetric group $\cS_n$ acting on the set $[n]=\{ 1,\ld ,n\}$, $n\ge 0$. We use the notation $\cC_{\al}$ both for the conjugacy class in $\cS_n$, and also for the formal sum over that conjugacy class in the group algebra of $\cS_n$, where the particular meaning of each usage will be clear from the context. In the group algebra of the symmetric group $\cS_{n}$, let $J_i$ denote the {\em Jucys-Murphy} element 
\begin{equation}\label{JMdefn}
J_i= (1\; i) + (2\; i) + \ld + (i-1 \; i), \quad i=1,\ld ,n.
\end{equation}

 On the other hand, a key algebraic element of the paper for the various formal power series in $\bp$ and other indeterminates is to focus on monomials in the elements of $\bp$. We introduce some terminology to help with this. For a partition $\al$, it clear that the monomial $p_{\al}$ has (total) {\em degree} $l(\al)$ in the elements of $\bp$. We also say that $p_{\al}$ has {\em weight} $| \al |$ (note that for the content series, this is also the degree of the indeterminate $z$). We will consider partial differential operators in the elements of $\bp$ of the form $p_{\ga} \pp_{\al}$, and note that, in the above terminology, this operator has the following effect on a monomial in the elements of $\bp$: it changes the weight by $| \ga | - | \al |$ (it also changes the degree by $l(\ga)-l(\al)$, but that will not be as important).

In Section~\ref{sec2} of the paper we introduce some families of operators on symmetric functions that are described by their action on the Schur functions, including two related families of particular importance, the $\cU$ and $\cD$ operators. We then give our main result, an equation for the content series in terms of these operators. The equation in the main result is actually a (partial) differential equation due to the fact that the operators can be realized as a partial differential operators in the elements of $\bp$. In Section~\ref{s3} we give a combinatorial interpretation for the content series in terms of products of permutations via Jucys-Murphy elements, or equivalently an algebraic geometry interpretation in terms of branched covers of the sphere by a Riemann surface of arbitrary genus. This geometric point of view is then applied in Section~\ref{s4} to show that the content series specializes to the generating series for three previously studied problems concerning branched covers: Hurwitz numbers, monotone Hurwitz numbers, and $m$-hypermap numbers. In Section~\ref{UDsec}  we give explicit expressions for the $\cU$ and $\cD$ operators as differential operators in the elements of $\bp$, in which the $\cU$ operators increase weight by $1$, and the $\cD$ operators decrease weight by $1$. These explicit expressions are graded by total degree, and in Section~\ref{s6} we prove that the grading is actually indexed by genus from a Jucys-Murphy point of view. This allows us to give a completely combinatorial proof for the genus $0$ graded portion of the $\cU$ and $\cD$ operators. In Section~\ref{s8} we apply our genus interpretation for the operators to the main result, and thus obtain a partial differential equation for the genus $0$ portion of the content series. Also in Section~\ref{s8} we give some technical results for symmetric functions in a finite set of variables that will help us with the applications of our genus $0$ pde.

There are three applications of the genus $0$ pde, which are given in Section~\ref{s9}. These applications give us uniform {\em algebraic} proofs for the number of branched covers in genus $0$ for each of Hurwitz numbers, monotone Hurwitz numbers and $m$-hypermap numbers - i.e., for the three specializations of the content series that were described in Section~\ref{s4}. Our proof is new for the cases of Hurwitz numbers and the $m$-hypermap numbers of Bousquet-M\'elou and Schaeffer~\cite{bms}.  Motivated by this, Section~\ref{s9} begins with a discussion of {\em combinatorial} aspects of our methods and the impact of the underlying multiplication by Jucys-Murphy elements.

\section{A partial differential equation for the content series}\label{sec2}

Suppose that we have families of operators $\{ \cU_k \}_{k \ge 0}$ and $\{ \cD_k \}_{k \ge 0}$ whose action on Schur functions is given by
\begin{equation}\label{defUkDk}
\cU_k\, s_{\la} = \sum_{\substack{\Box:\\ \mu = \la + \Box}} c(\Box )^k s_{\mu},\qquad\qquad \cD_k\, s_{\la} = \sum_{\substack{\Box:\\ \la = \mu + \Box}} c(\Box )^k s_{\mu}, \qquad \la \vdash n, \quad n,k\ge 0.
\end{equation}
In the range of summations above we have used the notation $\la = \mu + \Box$ to mean that both $\la$ and $\mu$ are partitions and that the diagrams differ by a single cell $\Box$ (which must be the rightmost cell in some row of $\la$ and must be the bottommost cell in some column of $\la$). Note that the Murnaghan-Nakayama formula gives
\[ p_1 s_{\la} = \sum_{\substack{\Box:\\ \mu = \la + \Box}} s_{\mu},\qquad\qquad p_1^{\perp} s_{\la} = \sum_{\substack{\Box:\\ \la = \mu + \Box}}  s_{\mu}, \qquad \la \vdash n, \quad n\ge 0, \]
and so we may deduce that for $k=0$ such operators exist and are given by $\cU_0 = p_1$ and $\cD_0=\pp_1$; moreover, we deduce that $\cU_0$ increases weight (and degree) by $1$, and $\cD_0$ decreases weight  (and degree) by $1$. In \cite{l1}, Lassalle gives a method of recursively constructing the $\cU$ and $\cD$ operators which depends on the underlying variables $\bx$. In Section~\ref{UDsec} of this paper, using a similar method, we construct these operators without needing to make the variables $x_i$ explicit. This allows us to express them as differential operators in the $p_i$'s, via explicit summations of monomials in the $p_i$'s and $\pp_i$'s. Note that $\cD^{\perp}_k=\cU_k$, and of course $\cU^{\perp}_k=\cD_k$ (see~(\ref{defnadj})) for $k\ge 0$, so results for one of these classes immediately yields comparable results for the other class (via $\perp$), and consequently we will concentrate most of our attention on the $\cU_k$'s.

As an aside, the $\cU$ and $\cD$ operators are closely related to the {\em join-cut} operator
\begin{equation}\label{Deltadef}
\De = \tfrac{1}{2} \sum_{i,j\ge 1} \left(  p_i p_j \pp_{i+j} + p_{i+j} \pp_i \pp_j  \right) ,
\end{equation}
which was used in \cite{gj1} to count the number of factorizations of a permutation into transpositions (see also~\cite{goul}). It is clear that $\De$ is self-adjoint, but it is also an eigenoperator for Schur functions, as specified by
	\[ \Delta s_\la = \sum_{\Box \in \la} c(\Box) s_\la, \qquad \la\in\cP. \]
and can be used to construct the $\cU_k$ and $\cD_k$ recursively as follows:  Start with $\cU_0 = p_1$ and $\cD_0 = \pp_1$. Then, for $k\ge 1$, let $\cU_k = [\Delta, \cU_{k-1}]$ and $\cD_k = [\cD_{k-1}, \Delta]$ where $[\cA, \cB] = \cA\cB - \cB\cA$ is the commutator of any two operators $\cA$ and $\cB$. It is straightforward to check that this gives the operation on Schur functions that is specified in~(\ref{defUkDk}), but we have not been able to use this recursive method for the $\cU_k$ and $\cD_k$ to determine them explicitly. Note also that the join-cut operator is weight preserving for monomials in the elements of~$\bp$.

Two related classes of operators that use a similar range of summation are the {\em Sekiguchi-Debiard operators} $\{ \cC_k \}_{k \ge 0}$ and $\{ \cT_k \}_{k \ge 0}$. These are also eigenoperators for Schur functions, as specified by 
\begin{equation}\label{defCkTk}
\cC_k\, s_{\mu} = \left( \frac{n}{\chi^{\mu}_{1^n}} \sum_{\substack{\Box:\\ \mu = \la + \Box}} \!\!\!c(\Box )^k \chi^{\la}_{1^{n-1}} \right) s_{\mu}, \qquad \cT_k\, s_{\mu} = \left(  \frac{1}{(n+1) \chi^{\mu}_{1^n}} \sum_{\substack{\Box:\\ \la = \mu + \Box}}\!\!\! c(\Box )^k \chi^{\la}_{1^{n+1}} \right) s_{\mu},
\end{equation}
for $\mu \vdash n$ and $n,k\ge 0$. The operators $\cC_k$ and $\cT_k$ arise in the representation theory of the symmetric group, where their eigenvalues comprise the moments of the cotransition and transition measures, respectively, on partitions. Lassalle~\cite{l2} gave explicit formulas for some low order moments of the cotransition and transition measures (in the more general setting of measures with Jack parameter), of which the most relevant for our purposes are
\begin{equation}\label{transcotrans}
\cC_0 \, s_{\mu} = |\mu | s_{\mu}, \qquad \cC_1 \, s_{\mu} = 2\sum_{\Box \in \mu} c(\Box) s_{\mu} = 2\De s_{\mu}, \qquad \cT_0 \, s_{\mu} = s_{\mu}, \qquad \qquad \mu\in\cP.
\end{equation}

In the following theorem, the main result of this paper, we prove that the operators $\cU_k$ and $\cC_k$ can be used to construct a partial differential equation for the content series. The equation is expressed in terms of these operators, so the fact that it is a partial differential equation relies on the fact that the $\cU_k$ and $\cC_k$ can be written explicitly as partial differential operators in the elements of $\bp$, which will be carried out in Section~\ref{UDsec}.
\begin{theorem}\label{Phif_gpde}
If $f(x)=\sum_{i \ge 0} f_i x^i$ and $g(x)=\sum_{i \ge 0} g_i x^i$ with $g_0 \neq 0$, then the content series $\Phi^{fg^{-1}(x)}=\Phi^{fg^{-1}(x)}(y,z,\bp)$ is the unique solution to the partial differential equation
 \begin{equation}\label{mainpde}
 \left( \sum_{i \ge 0} f_i y^i \,\cU_i\right) \Phi^{fg^{-1}(x)} = z^{-1} \left( \sum_{i\ge 0} g_i y^i \cC_i \right) \Phi^{fg^{-1}(x)}
 \end{equation}
 that satisfies the initial condition $\Phi^{fg^{-1}(x)}(y,0,\bp)=1$.
 \end{theorem}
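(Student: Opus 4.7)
The plan is to verify the pde by expanding both sides in the Schur basis and comparing coefficients of $s_\mu$, and then to establish uniqueness by showing that the pde, interpreted as a recurrence in $n = [z^n]$, has an invertible leading operator on the right. Throughout, write $h(x) = f(x)g(x)^{-1}$ and $\Phi = \Phi^{h(x)}$, so that
\[ \Phi = \sum_{n \ge 0} \frac{z^n}{n!}\, \phi_n, \qquad \phi_n = \sum_{\la \vdash n} \chi^\la_{(1^n)}\, s_\la(\bp)\, \prod_{\Box \in \la} h(yc(\Box)). \]

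For the left-hand side, I would apply (\ref{defUkDk}) term by term. Swapping the resulting sum over $(\la,\Box)$ with $\mu = \la + \Box$ fixed and re-indexing so that $\la = \mu - \Box$ puts everything on the Schur function $s_\mu$. Factoring out the full content product over $\mu$ via $\prod_{\Box' \in \mu - \Box} h(yc(\Box')) = h(yc(\Box))^{-1} \prod_{\Box' \in \mu} h(yc(\Box'))$, and then summing $\sum_k f_k y^k c(\Box)^k = f(yc(\Box))$, the key cancellation $f(yc(\Box))/h(yc(\Box)) = g(yc(\Box))$ emerges since $h = fg^{-1}$. The resulting $[s_\mu]$-coefficient of $\bigl(\sum_k f_k y^k \cU_k\bigr)\Phi$ is then, for $\mu \vdash m$,
\[ \frac{z^{m-1}}{(m-1)!}\Bigl(\prod_{\Box' \in \mu} h(yc(\Box'))\Bigr) \sum_{\Box \in \mu^{\mathrm{out}}} g(yc(\Box))\, \chi^{\mu-\Box}_{(1^{m-1})}, \]
where $\mu^{\mathrm{out}}$ is the set of removable cells of $\mu$.

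For the right-hand side, $\cC_k$ is diagonal on Schur functions by (\ref{defCkTk}), so $\bigl(\sum_k g_k y^k \cC_k\bigr) s_\mu$ produces the eigenvalue $\frac{m}{\chi^\mu_{(1^m)}} \sum_{\Box \in \mu^{\mathrm{out}}} g(yc(\Box))\, \chi^{\mu-\Box}_{(1^{m-1})}$. Multiplying by $z^{-1}$ and by $\chi^\mu_{(1^m)} z^m/m!$ from the Schur expansion of $\Phi$, the factors $\chi^\mu_{(1^m)}$ cancel and $m/m! = 1/(m-1)!$ matches the left side exactly, with no appeal to the branching rule required. Hence the pde is satisfied.

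For uniqueness, I would separate the $k=0$ term of the right-hand side: since $\cC_0 = |\mu|$ acts as $n$ on weight-$n$ Schur functions by (\ref{transcotrans}), the operator $\sum_k g_k y^k \cC_k$ acts on the $\phi_n$-component as a scalar $n g_0 + O(y)$ for each $\mu \vdash n$. Because $g_0 \ne 0$ and $n \ge 1$, this scalar is a unit in the formal power series ring in $y$, so the pde, read as
\[ n \cdot \phi_n \cdot \text{(unit)} = n \cdot \Bigl( \sum_k f_k y^k \cU_k\Bigr) \phi_{n-1} + \text{(lower-$y$ known terms)}, \]
determines $\phi_n$ from $\phi_{n-1}$ for all $n \ge 1$, while the initial condition fixes $\phi_0 = 1$. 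The main obstacle I anticipate is keeping the bookkeeping of the Schur expansion, the $\binom{}{}$-like factorial denominators, and the re-indexing $n \leftrightarrow m=n+1$ consistent; once organised, the cancellation of $\chi^\mu_{(1^m)}$ between the numerator of the $\Phi$-coefficient and the denominator of the $\cC_k$-eigenvalue is the clean mechanism that makes the two sides agree term by term.
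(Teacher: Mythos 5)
Your proof is correct and follows essentially the same route as the paper: expand in the Schur basis, transfer the sum over added cells of $\la$ to a sum over removable cells of $\mu$, use $f/h=g$ to convert the content product over $\la$ into one over $\mu$, and recognise the resulting factor as the eigenvalue of $\sum_i g_i y^i \cC_i$ from (\ref{defCkTk}). Your uniqueness argument, via the diagonal action of the right-hand operator with unit eigenvalue $ng_0+O(y)$ on weight-$n$ Schur functions, is the recursion the paper leaves implicit, and it is sound.
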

 
\begin{proof}
By reordering summations we obtain
\begin{align*}
\left( \sum_{i \ge 0} f_i y^i \,\cU_i\right) \Phi^{fg^{-1}(x)} &= \sum_{n\ge 0} \frac{z^n}{n!} \sum_{\la \vdash n} \chi^{\la}_{(1^n)}  \left( \prod_{ \Box \in \la} \frac{f(yc(\Box))}{g(yc(\Box))} \right) \sum_{\substack{\Box:\\ \mu = \la + \Box}} \left( \sum_{i \ge 0} f_i y^i c(\Box )^i \right) s_{\mu} \\
   &= \sum_{n\ge 0} \frac{z^n}{n!} \sum_{\mu \vdash n+1} \!\! \chi^{\mu}_{1^{n+1}} s_{\mu} \left( \prod_{ \Box \in \mu} \frac{f(yc(\Box))}{g(yc(\Box))} \right) \frac{1}{\chi^{\mu}_{1^{n+1}}} \sum_{\substack{\Box:\\ \mu = \la + \Box}} \left( \sum_{i \ge 0} g_i y^i c(\Box)^i \right) \chi^{\la}_{(1^n)},
\end{align*}
and~(\ref{mainpde}) follows immediately from~(\ref{defCkTk}). The initial condition is straightforward.
\end{proof}

There is a companion result to~(\ref{mainpde}) using the operators $\cD_k$ and $\cT_k$, and whose proof is almost identical. This result states that the content series $\Phi^{fg^{-1}(x)}$ satisfies the partial differential equation
  \begin{equation}\label{dualmainpde}
 \left( \sum_{i \ge 0} g_i y^i \,\cD_i\right) \Phi^{fg^{-1}(x)} = z \left( \sum_{i\ge 0} f_i y^i \cT_i \right) \Phi^{fg^{-1}(x)},
 \end{equation}
but the initial conditions are quite involved, requiring us to fix $\Phi^{fg^{-1}(x)}(y,z,\bp)\left. \right|_{p_1=0}$, and we have not been able to apply this to uniquely identify particular content series.

The following special case of Theorem~\ref{Phif_gpde} is an immediate corollary, slightly restated to avoid explicit mention of the operators $\cC_k$.
\begin{corollary}\label{Phifpde}
If $f(x)=\sum_{i \ge 0} f_i x^i$, then the content series $\Phi^{f(x)}=\Phi^{f(x)}(y,z,\bp)$ is the unique solution to the partial differential equation
\begin{equation}\label{fpde}
\left( \sum_{i \ge 0} f_i y^i \,\cU_i\right) \Phi^{f(x)} = \frac{\partial}{\partial z} \Phi^{f(x)}
\end{equation}
that satisfies the initial condition $\Phi^{f(x)}(y,0,\bp)=1$.
\end{corollary}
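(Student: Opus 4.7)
The plan is to derive Corollary~\ref{Phifpde} directly from Theorem~\ref{Phif_gpde} by specializing $g(x)=1$, i.e., taking $g_0=1$ and $g_i=0$ for all $i\ge 1$. With this choice, $fg^{-1}(x)=f(x)$, the left-hand side of~(\ref{mainpde}) is unchanged, and the right-hand side collapses to the single term $z^{-1}\cC_0\Phi^{f(x)}$.

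The next step is to identify $z^{-1}\cC_0$ with $\partial/\partial z$ when acting on the content series. By~(\ref{transcotrans}), $\cC_0$ acts on $s_\la$ as multiplication by $|\la|$. Inspecting the definition~(\ref{defPhifseries}), every Schur summand $s_\la$ appearing at level $n$ is accompanied by the factor $z^n/n!$, and $n=|\la|$ is precisely the power of $z$ at that level. Hence $\cC_0\Phi^{f(x)}=z\,\frac{\partial}{\partial z}\Phi^{f(x)}$, so dividing by $z$ yields $\frac{\partial}{\partial z}\Phi^{f(x)}$, producing~(\ref{fpde}). The initial condition at $z=0$ is identical in the two statements and requires no argument beyond that in the proof of the theorem.

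Uniqueness transfers directly from the uniqueness statement in Theorem~\ref{Phif_gpde}, since~(\ref{fpde}) is literally~(\ref{mainpde}) with $g=1$. As a sanity check one can verify uniqueness independently: (\ref{fpde}) expresses $\partial_z\Phi^{f(x)}$ in terms of $\Phi^{f(x)}$, so the coefficient of $z^n/n!$ in $\Phi^{f(x)}$ is determined recursively from the coefficient of $z^{n-1}/(n-1)!$, with the initial condition $\Phi^{f(x)}(y,0,\bp)=1$ fixing the $n=0$ term.

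The derivation is essentially routine; no serious obstacle arises. The only observation to make is that $\cC_0$ is the Euler operator $z\,\partial/\partial z$ when applied to the content series, which is what allows us to absorb the $\cC_k$-formalism of the theorem into pure differentiation in $z$ and thereby remove the $\cC_k$ from the statement.
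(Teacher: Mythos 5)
Your proposal is correct and follows exactly the paper's own route: specialize Theorem~\ref{Phif_gpde} to $g(x)=1$ and then use $\cC_0 s_{\mu}=|\mu|s_{\mu}$ from~(\ref{transcotrans}) to convert $z^{-1}\cC_0$ into $\partial/\partial z$ on the content series. The extra remark on uniqueness via the recursive determination of coefficients is a harmless elaboration of what the paper leaves implicit.
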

 
\begin{proof}
In the special case $g(x)=1$, Theorem~\ref{Phif_gpde} gives
\[  \left( \sum_{i \ge 0} f_i y^i \,\cU_i\right) \Phi^{f(x)} = z^{-1} \cC_0 \Phi^{f(x)} , \]
and the result follows from~(\ref{transcotrans}).
 \end{proof}
 
The companion result to Corollary~\ref{Phifpde}, analogous to~(\ref{dualmainpde}), is that if $g(x)=\sum_{i \ge 0} g_i x^i$, then the content series $\Phi^{g^{-1}(x)}=\Phi^{g^{-1}(x)}(y,z,\bp)$ satisfies the partial differential equation
\begin{equation}\notag
\left( \sum_{i \ge 0} g_i y^i \,\cD_i\right) \Phi^{g^{-1}(x)} = z \Phi^{g^{-1}(x)}.
\end{equation}
Again, due to the complicated initial condition, we have not been able to apply this equation to uniquely identify particular content series.

\section{Permutation factorizations, branched covers and the content series} \label{s3}

In order to give a combinatorial interpretation of the content series, we now turn to the group algebra of the symmetric group. In addition to the basis of conjugacy classes $\{ \cC_{\al}: \al\vdash n\}$, the center of the group algebra of $\cS_n$ also has a basis of orthogonal idempotents $\{ \cF_{\la}: \la \vdash n\}$. These bases are related by the inverse linear relations
\begin{equation}\label{idempconj}
\cF_{\la}= \frac{\chi^{\la}_{(1^n)}}{n!} \sum_{\al \vdash n} \chi^{\la}_{\al} \cC_{\al},\quad  \la \vdash n,\qquad \qquad \qquad \cC_{\al}= | \cC_{\al} | \sum_{\la \vdash n} \frac{ \chi^{\la}_{\al} }{ \chi^{\la}_{(1^n)} } \cF_{\la},\quad \al \vdash n,
\end{equation}
which are parallel to the relations~(\ref{Schurpower}) between the bases of Schur functions and power sums for symmetric functions of degree $n$.

One of the key results (e.g., see~\cite{ju}~\cite{mu}) about the Jucys-Murphy elements defined in~(\ref{JMdefn}) is that any symmetric function of  $J_1,\ld ,J_n$ is contained in the center of the group algebra of $\cS_n$ (though it is clear that the Jucys-Murphy elements themselves are {\em not} contained in the center). Moreover, for any symmetric function $G(x_1,\ld ,x_n)$, we have the following result for the eigenvalues of the orthogonal idempotent $\cF_{\la}$ and the multiset of contents $\bc(\la)$:
\begin{equation}\label{JMorthcontent}
G(J_1,\ld ,J_n) \cF_{\la} = G(\bc(\la)) \cF_{\la},\qquad \la\vdash n.
\end{equation}
 
The next result gives an explicit interpretation for the content series $\Phi^{f(x)}$ as a generating series that counts factorizations in the symmetric group. We use the notation $[A]B$ to denote the {\em coefficient} of $A$ in the expansion of $B$, in this case in the context of the group algebra of $\cS_n$.

\begin{proposition}\label{Phiffactninterp}
\[ \Phi^{f(x)}(y,z,\bp)  = \sum_{n\ge 0} \frac{z^n}{n!} \sum_{\al \vdash n} p_{\al} | \cC_{\al} | [  \cC_{\al} ] \prod_{i=1}^n f(yJ_i), \]
\end{proposition}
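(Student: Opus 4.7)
The plan is to expand the product $\prod_{i=1}^n f(yJ_i)$ in the basis of orthogonal idempotents $\{\cF_\la : \la\vdash n\}$ using~(\ref{JMorthcontent}), convert to the conjugacy-class basis via~(\ref{idempconj}), extract the coefficient $[\cC_\al]$, and then recognize the resulting sum over $\al$ as $s_\la(\bp)$ via~(\ref{Schurpower}). The main conceptual input is that $G(x_1,\ldots,x_n) := \prod_{i=1}^n f(yx_i)$ is a symmetric function of $x_1,\ldots,x_n$, so $G(J_1,\ldots,J_n)$ lies in the centre of the group algebra of $\cS_n$ and is therefore a linear combination of the $\cF_\la$'s.

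First, since $\{\cF_\la\}$ is a complete system of orthogonal idempotents with $\sum_\la \cF_\la = 1$, any central element $Z$ satisfies $Z = \sum_\la (Z\cF_\la)$; applying this with $Z = \prod_i f(yJ_i)$ and using~(\ref{JMorthcontent}) with $G$ as above gives
\[
\prod_{i=1}^n f(yJ_i) \;=\; \sum_{\la \vdash n} \Bigl(\prod_{\Box \in \la} f(yc(\Box))\Bigr)\, \cF_\la .
\]
Second, I substitute the first relation of~(\ref{idempconj}) for $\cF_\la$ and read off the coefficient of $\cC_\al$:
\[
[\cC_\al]\prod_{i=1}^n f(yJ_i) \;=\; \sum_{\la \vdash n} \frac{\chi^\la_{(1^n)} \chi^\la_\al}{n!} \prod_{\Box \in \la} f(yc(\Box)) .
\]
Multiplying by $|\cC_\al|\, p_\al$ and summing over $\al \vdash n$, I swap the order of summation to obtain
\[
\sum_{\al\vdash n} p_\al\,|\cC_\al|\,[\cC_\al] \prod_{i=1}^n f(yJ_i) \;=\; \sum_{\la \vdash n} \chi^\la_{(1^n)} \Bigl(\prod_{\Box \in \la} f(yc(\Box))\Bigr) \sum_{\al \vdash n} \frac{|\cC_\al|}{n!} \chi^\la_\al\, p_\al .
\]
The inner $\al$-sum is exactly $s_\la(\bp)$ by the left-hand equation of~(\ref{Schurpower}). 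Multiplying by $z^n/n!$ and summing over $n\ge 0$ yields the definition~(\ref{defPhifseries}) of $\Phi^{f(x)}(y,z,\bp)$, completing the proof.

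There is no genuine obstacle here; the only point requiring care is to note that symmetry of $G$ in its $n$ arguments is what places $G(J_1,\ldots,J_n)$ in the centre so that the expansion in the $\cF_\la$-basis is valid, and to keep track of the normalising factors $\chi^\la_{(1^n)}/n!$ and $|\cC_\al|$ that appear in the two transition formulas~(\ref{Schurpower}) and~(\ref{idempconj}).
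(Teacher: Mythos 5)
Your proof is correct and follows exactly the route the paper intends: the paper's own proof is the single line ``follows immediately from~(\ref{Schurpower}),~(\ref{idempconj}) and~(\ref{JMorthcontent})'', and your argument simply writes out those three steps (idempotent expansion of the central element $\prod_i f(yJ_i)$, change of basis to conjugacy classes, and recognition of the inner sum as $s_\la(\bp)$) in full detail.
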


\begin{proof}
The result follows immediately from~(\ref{Schurpower}),~(\ref{idempconj}) and~(\ref{JMorthcontent}).
\end{proof}

For $m,n\ge 0$, and $\al \vdash n$, consider the $m$-tuple of permutations $(\pi_1,\ld ,\pi_m)$ in $\cS_n$ such that $\pi_1 \cd \pi_m=$ $\si \in \cC_{\al}$. Then $(\pi_1, \ld ,\pi_m)$ is called a {\em factorization} of $\si$. In the case that the subgroup~$\langle \pi_1,\ld ,\pi_m \rangle$ generated by~$\pi_1,\ld ,\pi_m,\si$ acts transitively on $[n]$, we call it a {\em transitive} factorization. Transitive factorizations have been studied extensively in recent years because they encode {\em branched covers} of the sphere by a Riemann surface; in this case we refer to the tuple $(\pi_1,\ld ,\pi_m)$ as the branched cover. Now, the Riemann-Hurwitz Theorem implies that the genus of the covering surface $g \ge 0$ is given by
\begin{equation}\label{RieHur}
n - l(\al) + \sum_{i=1}^m \left( n - l(\cyc (\pi_i) ) \right) = 2n -2 + 2g,
\end{equation}
where $\cyc(\pi_i)$ is the partition whose parts are the lengths of the disjoint cycles in the disjoint cycle representation of the permutation factor $\pi_i$. Equivalently, $\pi_i$ is contained in the conjugacy class $\cC_{\cyc(\pi_i) }$. In this case we say that both the permutation factorization and the corresponding branched cover also have genus $g$.

If each factor $\pi_i = (s_i \, t_i)$, $s_i<t_i$ is a transposition, then we say that the factorization is a {\em transposition} factorization. Note in this case that we have $l(\cyc(\pi_i))=n-1$, $i=1,\ld ,m$, so~(\ref{RieHur}) becomes $m= n + l(\al)-2 + 2g$ for transitive transposition factorizations. If we further constrain a transposition factorization so that $t_1\le \cd \le t_n$, then we call it a {\em monotone} transposition factorization (these have been previously considered, {\em e.g.}, in~\cite{ggn1}).

Next we will use the result of Proposition~\ref{Phiffactninterp} to obtain a description for the series expansion of the logarithm of the content series. The proof involves a combinatorial interpretation in terms of transitive monotone transposition factorizations.
\begin{corollary}\label{logPhif}
For $f(x) = \sum_{i\ge 0} f_i x^i$, where $f_0,f_1,\ld$ are indeterminates, we have
\[ \log \Phi^{f(x)}(y,z,\bp) = \Psi^{f(x)}(y,z,\bp) = \sum_{g\ge 0} \Psi^{f(x)}_g(y,z,\bp) ,\]
where
\[ \Psi^{f(x)}_g(y,z,\bp) = \sum_{n\ge 1} \frac{z^n}{n!} \sum_{\be \vdash n} b(\be ,g) p_{\be} y^{n + l(\be) - 2 + 2g},\]
for some polynomials $b(\be ,g)$ in  $f_0,f_1,\ld$.
\end{corollary}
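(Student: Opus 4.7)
The plan is to derive a combinatorial formula for $\Phi^{f(x)}$ from Proposition~\ref{Phiffactninterp}, then apply the exponential formula, and finally use Riemann--Hurwitz to split by genus. Expanding $f(yJ_i)=\sum_{k\ge0}f_k y^k J_i^k$ and recalling $J_i=(1\,i)+\cd+(i-1\,i)$, the product $J_1^{k_1}\cd J_n^{k_n}$ becomes a sum of products of transpositions in which the $i$th block of $k_i$ factors all have top element~$i$; these are exactly the monotone transposition factorizations defined in Section~\ref{s3}, with $k_i$ transpositions of top~$i$ and total length $m=\sum_i k_i$. Since $\prod_i f(yJ_i)$ is symmetric in the $J_i$'s and hence central, Proposition~\ref{Phiffactninterp} rewrites as
\[ \Phi^{f(x)}(y,z,\bp) = \sum_{n\ge0}\frac{z^n}{n!}\sum_{\si\in\cS_n} p_{\cyc(\si)}\!\!\sum_{(\pi_1,\ld,\pi_m)}\!\! y^m\prod_{i=1}^n f_{k_i}, \]
where the inner sum runs over monotone transposition factorizations of $\si$.

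Next I would invoke the exponential formula. A monotone factorization $(\pi_1,\ld,\pi_m)$ on $[n]$ decomposes canonically along the orbits $S_1,\ld,S_\ell$ of $\langle\pi_1,\ld,\pi_m\rangle$: each $\pi_j$ has both endpoints in a single orbit, so it belongs unambiguously to one block's sub-factorization, and $\si=\pi_1\cd\pi_m$ likewise restricts to each orbit. Under the order-preserving relabeling $S_j\to[|S_j|]$, each sub-factorization is again monotone, and conversely any collection of monotone factorizations on disjoint labeled blocks reassembles uniquely into a monotone factorization on $[n]$ by merging transpositions in order of top element. The weight $p_{\cyc(\si)}\,y^m\prod_i f_{k_i}$ factors multiplicatively over the orbits (with the trivial orbit $\{a\}$ contributing weight $p_1\cdot f_0$), so the exponential formula applies and gives
\[ \log\Phi^{f(x)} = \sum_{n\ge1}\frac{z^n}{n!}\sum_{\si\in\cS_n}p_{\cyc(\si)}\!\!\sum_{\substack{(\pi_1,\ld,\pi_m)\\ \text{transitive monotone}\\ \text{factn of }\si}}\!\! y^m \prod_{i=1}^n f_{k_i}. \]

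Finally, for a transitive transposition factorization of $\si\in\cC_\be$ with $\be\vdash n$, the Riemann--Hurwitz equation~(\ref{RieHur}) with $l(\cyc(\pi_j))=n-1$ for each $j$ specializes to $m=n+l(\be)-2+2g$ where $g\ge0$ is the genus. Stratifying by genus and collecting like powers of $y$ produces $\log\Phi^{f(x)}=\sum_{g\ge0}\Psi^{f(x)}_g$ with
\[ b(\be,g) = \sum_{\si\in\cC_\be}\ \sum_{\substack{(\pi_1,\ld,\pi_m)\text{ transitive}\\ \text{monotone factn of }\si\\ \text{of genus }g}}\prod_{i=1}^n f_{k_i}, \]
which is a polynomial in $f_0,f_1,\ld$ because $m$ and the $k_i$'s are bounded once $\be$ and $g$ are fixed. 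The only delicate step is the exponential formula application, where one must check that orbit decomposition and the merging reconstruction are mutually inverse operations that preserve monotonicity and respect the multiplicative weighting; everything else is bookkeeping.
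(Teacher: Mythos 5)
Your proof is correct and takes essentially the same route as the paper's: interpret $\Phi^{f(x)}$ via Proposition~\ref{Phiffactninterp} as an exponential generating function (in $z$) for monotone transposition factorizations, take the logarithm to restrict to the transitive (connected) ones, and use Riemann--Hurwitz to write the exponent of $y$ as $n+l(\be)-2+2g$. The only difference is that you spell out the orbit-decomposition and unique-merging bijection underlying the exponential formula, a step the paper simply cites as standard theory.
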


\begin{proof}
For $m,n\ge 0$, $i_1,\ld ,i_n\ge 0$ with $i_1+\ld +i_n=m$, and $\al \vdash n$, consider monotone transposition factorizations $((s_1 \, t_1),\ld ,( s_m\, t_m))$ $s_i<t_i, i=1,\ld ,m$, in which for $j=1,\ld ,n$, exactly $i_j$ of $t_1,\ld ,t_m$ are equal to $j$ (which means that the first $i_1$ are equal to $1$, the next $i_2$ are equal to $2$, etc., and the last $i_n$ are equal to $n$). If we let $a(\al,m)$ be the sum of $f_{i_1} \cd f_{i_n}$ over all monotone transposition factorizations that correspond to a given choice of $m,n,\al$, then it follows directly from the statement of Proposition~\ref{Phiffactninterp} that we can write
\[ \Phi^{f(x)}(y,z,\bp)  = \sum_{n\ge 0} \frac{z^n}{n!} \sum_{\al \vdash n} \sum_{m\ge 0} a(\al,m) p_{\al} y^m. \]
Equivalently, $\Phi^{f(x)}$ is the generating function for monotone transposition factorizations to which each factorization contributes the monomial $\frac{z^n}{n!} f_{i_1}\cd f_{i_n} p_{\al} y^m$. This is thus an {\em exponential} generating function in $z$, and it is a standard part of the theory of exponential generating functions (see, {\em e.g.},~\cite{gj0}) that the logarithm of an exponential generating function is the generating function for the subset of {\em connected} objects. In the case of monotone transposition factorizations, the connected objects are precisely the {\em transitive} monotone transposition factorizations, and as noted above, it follows from~(\ref{RieHur}) that $m=n+l(\al)-2+2g$ for some $g\ge 0$.  When we apply the logarithm, we thus obtain
\[ \log \Phi^{f(x)} = \Psi^{f(x)} = \sum_{g \ge 0} \sum_{n\ge 1} \frac{z^n}{n!} \sum_{\be \vdash n} b(\be ,g) p_{\be} y^{n+ l(\be) -2 +2g} ,\]
where $b(\be ,g)$ is the sum of $f_{i_1} \cd f_{i_n}$ over all transitive monotone transposition factorizations that correspond to a given choice of $g,n,\al$. The result follows immediately. 
\end{proof}

\section{Special cases of the content series}\label{s4}

We now consider three special cases of the transitive permutation factorizations described in Section~\ref{s3}. Let $H_g(\al)$ be the number of genus $g$ transitive transposition factorizations of $\si$ ranging over all $\si\in\cC_{\al}$, for $\al\vdash n$, $n\ge 1$, and $g\ge 0$. The $H_g(\al)$ are called {\em Hurwitz} numbers, which have been studied extensively in all genera (see, e.g.,~\cite{gj1}) because of their appearance in Witten's Conjecture~\cite{wi}, first proved by Kontsevich~\cite{kon}, and then the moduli space description in~\cite{elsv}. If we define the generating series
\[ \Hu = \Hu(y,z,\bp) = \sum_{g \ge 0} \Hu_g, \qquad \Hu_g = \sum_{n\ge 1}\frac{z^n}{n!} \sum_{\be \vdash n} \frac{H_g(\be)}{( n + l(\be) - 2 + 2g )!} p_{\be} y^{n + l(\be) - 2 + 2g}, \quad g\ge 0, \]
then it was proved in~\cite{gj2} that
\[ \Hu = \log \left( \sum_{n\ge 0} \frac{z^n}{n!} \sum_{\la \vdash n} \chi^{\la}_{(1^n)} s_{\la} (\bp) \prod_{ \Box \in \la} e^{yc(\Box)} \right) . \]
Thus, from Corollary~\ref{logPhif} and~(\ref{defPhifseries}), we have
\begin{equation}\label{caseHu}
\Hu=\Psi^{e^x}.
\end{equation}

As a second special case, let $\vec{H}_g(\al)$ be the number of genus $g$ transitive monotone transposition factorizations of $\si$ ranging over all $\si\in\cC_{\al}$, for $\al\vdash n$, $n\ge 1$, and $g\ge 0$. These numbers $\vec{H}_g(\al)$ are called {\em monotone Hurwitz} numbers, and have been studied in all genera~\cite{ggn1, ggn2}. Among their properties is a close connection to the HCIZ matrix integral (e.g., see~\cite{ggn3}.) If we define the generating series
\[ \vec{\Hu} = \vec{\Hu}(y,z,\bp) = \sum_{g \ge 0} \vec{\Hu}_g, \qquad \vec{\Hu}_g = \sum_{n\ge 1} \frac{z^n}{n!} \sum_{\be \vdash n} \vec{H}_g(\be) p_{\be} y^{n + l(\be) - 2 + 2g}, \quad g \ge 0,\]
then from the proof of Corollary~\ref{logPhif} with $f(x) = \sum_{i\ge 0} 1\cdot x^i=(1-x)^{-1}$, we have
\[ \vec{\Hu} = \log \left( \sum_{n\ge 0} \frac{z^n}{n!} \sum_{\la \vdash n} \chi^{\la}_{(1^n)} s_{\la} (\bp) \prod_{ \Box \in \la} \left( 1 - yc(\Box) \right)^{-1} \right) .  \]
Thus, from Corollary~\ref{logPhif} and~(\ref{defPhifseries}), we have
\begin{equation}\label{casevecHu}
\vec{\Hu} = \Psi^{(1-x)^{-1}}.
\end{equation}

As a third special case, let $G^{(m)}_g(\al)$ be the number of genus $g$ transitive factorizations of $\si$ with precisely $m$ factors, ranging over all $\si\in\cC_{\al}$, for $\al\vdash n$, $n \ge 1$, and $m,g\ge 0$. (Note that in this case, where the factors are arbitrary permutations in $\cS_n$, given $\al$ and $n$, $m$ and $g$ are constrained by~(\ref{RieHur}), but $m$ is not uniquely determined by $g$, unlike in the case where the factors are transpositions.) These numbers $G^{(m)}_g(\al)$ are called $m$-{\em hypermap} numbers, and have been studied in genus $0$, see~\cite{bms,gs,fang}. If we define the generating series
\[ \Hy^{(m)} = \Hy^{(m)}(y,z,\bp) = \sum_{g \ge 0} \Hy^{(m)}_g, \qquad \Hy^{(m)}_g = \sum_{n\ge 1} \frac{z^n}{n!} \sum_{\be \vdash n} G^{(m)}_g(\be) p_{\be} y^{n + l(\be) - 2 + 2g}, \quad g \ge 0, \]
then it was proved in~\cite{gj2} that
\[ \Hy^{(m)} = \log \left(  \sum_{\la \vdash n} \chi^{\la}_{(1^n)} s_{\la} (\bp) \prod_{ \Box \in \la} \left( 1 + yc(\Box)\right)^m \right) .  \]
Thus, from Corollary~\ref{logPhif} and~(\ref{defPhifseries}), we have
\begin{equation}\label{caseHy}
\Hy^{(m)} = \Psi^{(1+x)^m}.
\end{equation}
 
 Thus in all three cases, via~(\ref{caseHu}),~(\ref{casevecHu}) and~(\ref{caseHy}), the generating series are specializations of the content series and its logarithm. 
 
\section{Constructing the $\cU$ and $\cD$ Operators}\label{UDsec}

In this section we give a construction of the $\cU$ and $\cD$ operators in terms of the {\em Bernstein} operators $\{ B_n : n \in \bbZ \}$, which are differential operators acting on symmetric functions (see~\cite[p. 95]{mac} and~\cite[p. 69]{zele}). The generating series for  the Bernstein and adjoint Bernstein operators can be written as
 	\[ B(t) = \sum_{n \in \bbZ} B_n t^n = \exp\left( \sum_{k \geq 1} \frac{t^k}{k} p_k \right) \exp \left( - \sum_{k \geq 1} \frac{t^{-k}}{k} p_k^\perp \right), \]
 	\[ B^\perp(t) = \sum_{n \in \bbZ} B^\perp_n t^n = \exp\left( - \sum_{k \geq 1} \frac{t^k}{k} p_k \right) \exp \left( \sum_{k \geq 1} \frac{t^{-k}}{k} p_k^\perp \right). \]
In the terminology of Section~\ref{sec1}, note that both $B_n$ and $B_n^{\perp}$ change the weight of monomials in the elements of~$\bp$ by $n$, $n \in \bbZ$.

In order to describe the action of these operators in a convenient way, we will define some additional specialized terminology and notation for partitions local to this section. We consider partitions as being drawn on the integer lattice and assign to each cell in the lattice its content, not just the cells in the diagram of the partition. Given a partition $\la$, we define the {\em boundary curve} of $\la$ to be the union of the following curves associated with the diagram of $\la$: (i) the line segments on the bottom edge of cells at the end of a column, (ii) the line segments on the right edge of cells at the end of a row, (iii) the one-way infinite horizontal line extending to the right from the top right corner of the cell at the end of row 1, (iv) the one-way infinite vertical line extending down from the bottom left corner of the cell at the end of column 1. The cells which lie below and/or to the right of the diagram of $\la$ but which intersect the boundary curve of $\la$, are called the {\em outer boundary} of $\la$. For example, on the left hand side of Figure~\ref{Figure.content1} the diagram of the partition $\la = (5,3,3,1)$ appears along with the cells and the corresponding contents in part of the outer boundary of $\la$. Note that each cell in the outer boundary of any partition $\la$ is uniquely determined by its content, and the set of contents of cells in the outer boundary of $\la$ is the set of integers. Given a cell $\square$ in the outer boundary of a partition $\la$ we will say that $\square$ lies to the {\em right} of $\la$ if the left edge of $\square$ is contained in the boundary curve of $\la$. Similarly we will say that $\square$ lies {\em below} $\la$ if the top edge of $\square$ is contained in the boundary curve of $\la$.

\begin{figure}[h]
	\centering
	\includegraphics[width=0.7\textwidth]{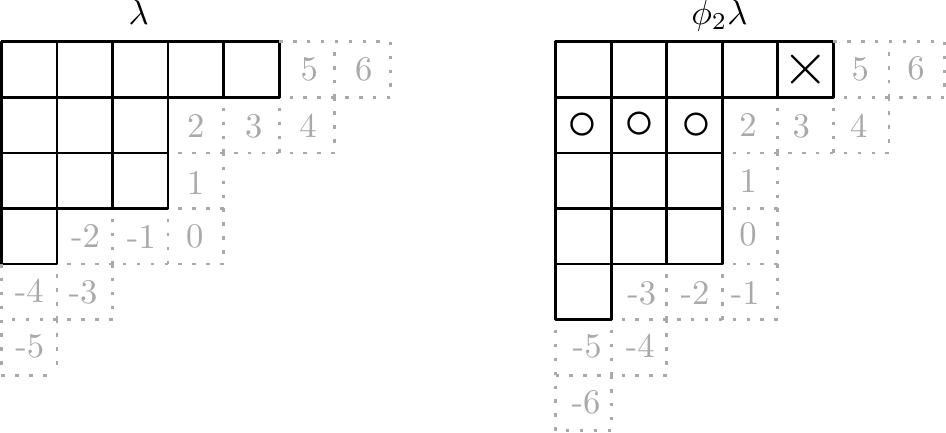}
	\caption{Example of outer boundary of $\la$ and $\phi_c\la$}
	\label{Figure.content1}
\end{figure}

\begin{figure}[h]
	\centering
	\includegraphics[width=0.7\textwidth]{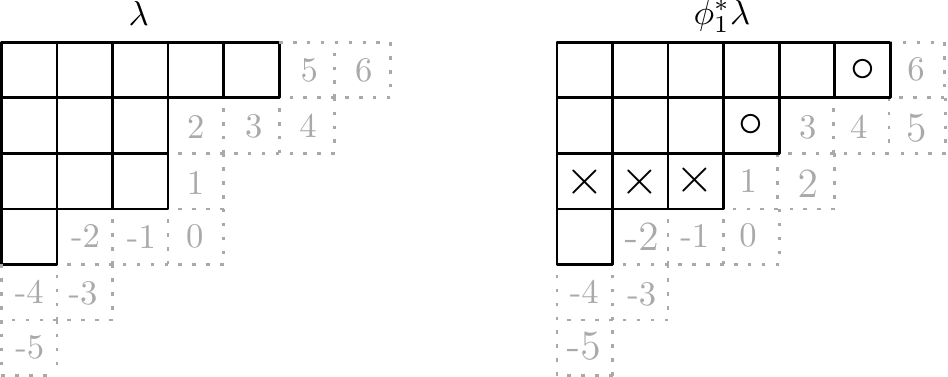}
	\caption{Example of $\phi^*_c\la$}
	\label{Figure.content2}
\end{figure}

Given a partition $\la$ and an integer $c$, if the cell $\square$ in the outer boundary of $\la$ with content $c$ lies below $\la$ then let
$\phi_c \la$ be the partition constructed from $\la$ by removing the last cell in each row of $\la$ ending in a cell with content greater than
$c$ and adding a cell to each column of $\la$ ending in a cell with content less than or equal to $c$. For example, $\phi_2(5,3,3,1)=(4,3,3,3,1)$, as illustrated on the right hand side of Figure~\ref{Figure.content1}. Let $r_c(\la)$ be the number of rows of $\la$ ending in a cell with content greater than $c$, i.e., the number of cells removed when constructing $\phi_c \la$. For example, $r_2(5,3,3,1)=1$. If $\square$ does not lie below $\la$ then leave $\phi_c \la$ as undefined and let $r_c(\la) = 0$.

Similarly, given a partition $\la$ and an integer $c$, if the cell $\square$ in the outer boundary of $\la$ with content $c$ lies to the right of $\la$
then let $\phi^*_c \la$ be the partition constructed from $\la$ by adding a cell to each row of $\la$ ending in a cell with content greater than or equal to $c$ and removing a cell from each column of $\la$ ending in a cell with content less than $c$. For example, $\phi^*_1(5,3,3,1)=(6,4,1)$, as illustrated on the right hand side of Figure~\ref{Figure.content2} (note that to obtain the diagram of $(6,4,1)$, one must slide upwards the cell below the row that has been removed). Let $r^*_c(\la)$ be the number of rows of $\la$ ending in a cell with content greater than or equal to $c$, i.e., the number of cells added when constructing $\phi^*_c \la$. For example, $r^*_1(5,3,3,1)=2$. If $\square$ does not lie to the right of $\la$ then leave $\phi^*_c \la$ as undefined and let $r^*_c(\la) = 0$.

The following result giving the action of $B_n$ and $B^{\perp}_n$ on a Schur function appeared in~\cite{cg}, where it was applied in the context of the KP hierarchy. The statement of the result has been adapted to use the notation described above.

\begin{proposition}\label{Bernstein.Action}
Given a partition $\la$ and an integer $n$, we have
			\[ B_n s_\la = (-1)^{r_n(\la)} s_{\phi_n \la}, \qquad B^\perp_n s_\la = (-1)^{r^*_n(\la)} s_{\phi^*_{-n} \la}, \]
	with the convention that a Schur function whose index is undefined is zero.
\end{proposition}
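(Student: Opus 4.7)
My plan is to derive both formulas from the classical closed form for the action of the Bernstein vertex operator on Schur functions, via Jacobi--Trudi straightening. The starting point will be the identity
\[ B_n s_\la = s_{(n,\la_1,\ldots,\la_k)}, \]
where the right-hand side is interpreted as the Jacobi--Trudi determinant $\det(h_{\mu_i-i+j})$ for the formal sequence $\mu = (n,\la_1,\ldots,\la_k)$, even when this is not a genuine partition. This identity is a standard consequence of the factorization of $B(t)$: the left factor $\exp(\sum_k t^k p_k / k)$ acts by multiplication by the generating series of the complete homogeneous functions $h_n$, while the right factor $\exp(-\sum_k t^{-k} p_k^{\perp}/k)$ is a ``removal'' operator whose action on $s_\la$ can be evaluated via the dual Jacobi--Trudi formula and the Pieri rule.

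I would then perform the straightening explicitly. Setting $\beta_i = \mu_i - i$, the tail $(\beta_2,\ldots,\beta_{k+1}) = (\la_1 - 2, \la_2 - 3, \ldots, \la_k - k - 1)$ is already strictly decreasing, so only $\beta_1 = n - 1$ must be inserted. The determinant vanishes precisely when $\beta_1$ coincides with some later $\beta_{j+1}$, i.e., when $n$ equals the content $\la_j - j$ of an end-of-row cell of $\la$, which corresponds exactly to the cell of content $n$ being absent from the outer boundary of $\la$. Otherwise, letting $r = r_n(\la)$ count the indices $j$ with $\la_j - j > n$, the required sort of $\beta$ is a cyclic shift of its first $r+1$ entries, contributing a sign $(-1)^r$ and producing the formal partition
\[ \nu = (\la_1 - 1, \ldots, \la_r - 1,\; n+r,\; \la_{r+1}, \ldots, \la_k), \]
valid as an actual partition provided $n + r \ge 0$.

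The crucial step will then be to verify that $\nu$ coincides with $\phi_n \la$ as defined in the paper. I intend to do this by passing to conjugate partitions and comparing column lengths cell by cell: the row-prefix decrement by one in $\nu$ encodes the deletion of the final cells of the $r$ rows of $\la$ whose end-content exceeds $n$, while the newly inserted part $n + r$ at position $r+1$ encodes precisely the addition of one cell to each column of $\la$ with end-content at most $n$. The same bookkeeping shows that the partition inequalities $\nu_r \geq \nu_{r+1} \geq \nu_{r+2}$ and the nonnegativity $n + r \geq 0$ together are equivalent to the condition that the cell of content $n$ lies below $\la$. The companion formula for $B^{\perp}_n$ I would obtain either by running the parallel straightening with the dual Jacobi--Trudi determinant in the $e_n$'s, or by taking Hall adjoints of the $B_n$ identity using the relation $B^{\perp}(t) = B(t^{-1})^{\perp}$ visible in the definitions. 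I expect the combinatorial matching of $\nu$ with $\phi_n \la$ to be the main technical obstacle, since the row-insertion and content-based add/remove prescriptions look superficially very different, and the boundary cases (such as $r = 0$, $r = k$, or $n$ lying in the infinite portion of the rim) require careful handling.
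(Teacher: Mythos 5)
Your proposal is correct and follows essentially the same route as the paper: the straightened partition $\nu=(\la_1-1,\ldots,\la_r-1,\,n+r,\,\la_{r+1},\ldots,\la_k)$ with sign $(-1)^r$ that you obtain from $B_n s_\la = s_{(n,\la_1,\ldots,\la_k)}$ via Jacobi--Trudi straightening is exactly the partition $\la^{(i)}$ (with $i=n+r+1$ and $u_i(\la)=r$) of \cite[Thm.~3.4]{cg}, which the paper simply cites rather than re-deriving as you do. The remaining work --- identifying $\nu$ with $\phi_n\la$, noting that vanishing of the determinant and the nonnegativity of $n+r$ together correspond to the outer-boundary cell of content $n$ failing to lie below $\la$ (it is still in the outer boundary, just not ``below''), and dualizing for $B_n^\perp$ --- coincides with the paper's brief translation step.
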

\begin{proof}
In~\cite[Thm. 3.4]{cg} we proved that $B_n s_\la = (-1)^{i - n - 1} s_{\la^{(i)}}$ where $\la^{(i)} = (\la_1 - 1, \cdots, \la_{u_i(\la)} - 1, i-1, \la_{u_i(\la) + 1}, \cdots)$ with nonnegative integer $u_i(\la)$ chosen uniquely so that $\la_{u_i(\la)} \geq i > \la_{u_i(\la) + 1}$, and positive integer $i$ chosen uniquely so that $|\la^{(i)}| - |\la| = n$. But for any positive integer $i$ it is straightforward to see that $\la^{(i)} = \phi_{i - 1 - u_i(\la)} \la$, $|\la^{(i)}| - |\la| = i - 1 - u_i(\la)$ and $r_{i - 1 - u_i(\la)}(\la)=u_i(\la)$, and the result follows immediately for $B_n s_{\la}$.

In~\cite{cg} we also proved that $B^\perp_n s_\la = (-1)^{j - 1} s_{\la^{(-j)}}$ where $\la^{(-j)} = (\la_1 + 1, \cdots, \la_{j-1} + 1, \la_{j+1}, \cdots)$, and positive integer $j$ is chosen uniquely so that $|\la^{(-j)}| - |\la| = n$. But for any positive integer $j$ it is straightforward to see that $\la^{(-j)} = \phi^*_{\la_j + 1 - j}\la$, $|\la^{(-j)}| - |\la| = j - (\la_j + 1)$ and $r^*_{j-(\la_j+1)}(\la)=j-1$, and the result follows immediately for $B^{\perp}_n s_{\la}$.
\end{proof}

Now that we have a convenient combinatorial description of the action of $B_n$ and $B^{\perp}_n$ on Schur functions, we consider the
action of the compound operator  $B_n B_{-m}^\perp$.

\begin{theorem}\label{RimHook.Action}
	Suppose that $\la$ is a partition and $n$, $m$ are integers such that $B_n B^\perp_{-m} s_\la \not = 0$. Then
		\begin{itemize}
		  \item If $n > m$, then $B_n B^\perp_{-m} s_\la = (-1)^{ht(\mu / \la)} s_\mu$ where $\mu / \la$ is a rim hook of length $n - m$ such that
		        $\bc(\mu / \la) = \{m, m+1, \cdots, n-1 \}$.
		  \item If $m > n$, then $B_n B^\perp_{-m} s_\la = (-1)^{ht(\la / \mu)} s_\mu$ where $\la / \mu$ is a rim hook of length $m - n$ such that
		        $\bc(\la / \mu) = \{n, n+1, \cdots, m-1 \}$.
		  \item If $n = m$, then $B_n B^\perp_{-m} s_\la = s_\la$ and the cell in the outer boundary of $\la$ with content $n$ lies to the right of $\la$.
		\end{itemize} 
\end{theorem}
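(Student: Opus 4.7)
The plan is to apply Proposition~\ref{Bernstein.Action} twice. Observing that $B^\perp_{-m}$ involves $\phi^*_{-(-m)} = \phi^*_m$, we obtain
\[ B_n B^\perp_{-m} s_\la \,=\, (-1)^{r^*_m(\la) + r_n(\phi^*_m \la)}\, s_{\phi_n \phi^*_m \la}, \]
provided both $\phi^*_m \la$ and $\phi_n \phi^*_m \la$ are defined. Nonvanishing is equivalent to the cell of content $m$ in the outer boundary of $\la$ lying to the right of $\la$, and the cell of content $n$ in the outer boundary of $\phi^*_m \la$ lying below it. The theorem therefore reduces to two tasks: identifying $\phi_n \phi^*_m \la$ combinatorially in terms of $\la$, and matching the resulting sign with $(-1)^{\hgt}$ of the corresponding rim hook.

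For the structural part, I would carry out a case analysis using the explicit row-and-column descriptions of $\phi^*_m$ and $\phi_n$. The operation $\phi^*_m$ elongates by one cell every row of $\la$ whose rightmost cell has content at least $m$, and shortens by one cell every column of $\la$ whose bottom cell has content less than $m$. When $n > m$, the subsequent $\phi_n$ reverses all but those alterations occurring in the content range $[m, n-1]$, and the surviving net change to $\la$ is exactly a rim hook $\mu / \la$ of length $n - m$ with contents $\{m, m+1, \ldots, n-1\}$. The case $n < m$ is symmetric and yields removal of a rim hook with contents $\{n, \ldots, m-1\}$. When $n = m$, the two operations cancel exactly, so $\phi_n \phi^*_m \la = \la$, and the requirement that the cell of content $n$ lie to the right of $\la$ is inherited directly from the nonvanishing hypothesis.

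For the sign, the key identity to establish is
\[ r^*_m(\la) + r_n(\phi^*_m \la) \,\equiv\, \hgt(\mu / \la) \pmod 2. \]
The cleanest way to see this is via the boundary lattice path of $\la$, a bi-infinite word in horizontal and vertical steps whose positions are indexed by content. Both $r^*_m(\la)$ and $r_n(\phi^*_m \la)$ admit simple descriptions as step counts in suffixes of this path, and telescoping the contribution past $\max(m,n)$ reduces their sum, modulo $2$, to the number of vertical steps in the portion of the path strictly between contents $m$ and $n$. This count is exactly one less than the number of rows spanned by the rim hook, i.e.\ its height.

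The main obstacle is this sign verification, since one must reconcile two row-counting functions evaluated at two different partitions ($\la$ and $\phi^*_m \la$) with a single height statistic on the rim hook $\mu / \la$. The path-encoding viewpoint is where the real work lives: once $r^*_c$ and $r_c$ are reinterpreted as counts of specific steps, the parity computation collapses to a one-line observation about the portion of the boundary between the two swap positions. Without this encoding, a purely diagrammatic verification would require splitting into subcases by whether each affected row of $\phi^*_m \la$ was modified by $\phi^*_m$ or not, and would become rather awkward.
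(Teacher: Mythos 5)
Your plan follows the paper's proof in its essential structure: both compose the two Bernstein actions via Proposition~\ref{Bernstein.Action} (noting that $B^\perp_{-m}$ invokes $\phi^*_{m}$), obtain $B_nB^\perp_{-m}s_\la=(-1)^{r^*_m(\la)+r_n(\phi^*_m\la)}s_{\phi_n\phi^*_m\la}$, identify $\phi_n\phi^*_m\la$ as $\la$ with a rim hook of contents $\{m,\ldots,n-1\}$ adjoined (or removed, or as $\la$ itself when $n=m$), and then match the resulting sign with $(-1)^{ht(\mu/\la)}$. Two points of divergence are worth recording. First, your summary of the structural step --- that $\phi_n$ ``reverses all but those alterations occurring in the content range $[m,n-1]$'' --- is an oversimplification: $\phi_n$ also adds cells that $\phi^*_m$ never touched, namely outer-boundary cells of $\la$ with content in $[m,n-1]$ that lie below $\la$ but not to its right, so the case analysis you defer is genuinely required. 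The paper resolves it by checking that every outer-boundary cell of $\la$ with content in $\{m,\ldots,n-1\}$ ends up in $\mu$ (three subcases) and then invoking the count $|\mu|-|\la|=n-m$ to conclude these are exactly the cells of $\mu/\la$. Second, for the parity identity $r^*_m(\la)+r_n(\phi^*_m\la)\equiv ht(\mu/\la)\pmod 2$ you propose a boundary-word step-counting argument, whereas the paper argues directly: every cell removed at the $\phi_n$ stage was added at the $\phi^*_m$ stage, so the surviving $\phi^*_m$-added cells number $|r^*_m(\la)-r_n(\phi^*_m\la)|$, and these occupy one cell in each row of $\mu/\la$ except the row containing the content-$m$ cell, giving the count $ht(\mu/\la)$ exactly rather than merely mod $2$. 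Either route works; your proposal is a sketch at precisely the two decisive steps, but the strategy is sound and coincides with the paper's.
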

\begin{proof}
	First, suppose $n > m$. Since $B_n B^\perp_{-m} s_\la \not = 0$ we know that $\mu = \phi_n \phi^*_m \la$ exists. Now the subset of cells added to $\phi^*_m \la$ with content greater than or equal to $n$ coincides with the subset of cells removed
	from $\mu$ with content greater than $n$, and similarly the subset of cells removed from $\phi^*_m \la$ with content less than $m$
	coincides with the subset of cells added to $\mu$ with content less than $m$. This means that the outer boundary of $\la$ and $\mu$
	is the same for cells with content greater than or equal to $n$ or less than $m$.
	
	\begin{figure}[h]
		\centering
		\includegraphics[width=0.85\textwidth]{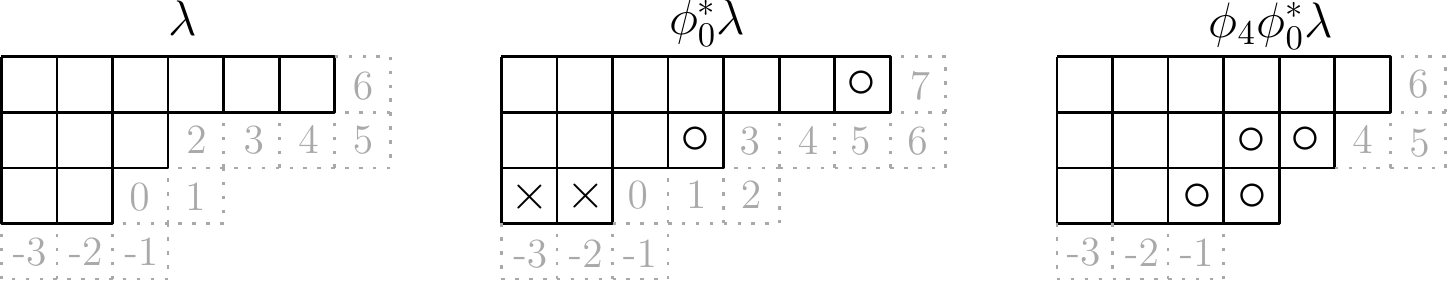}
		\caption{Example $\la$ and $\phi_4 \phi^*_0 \la$}
		\label{Figure.rimHook1}
	\end{figure}
	
	Let $\square$ be a cell in the outer boundary of $\la$ with content in the set $\{m, m+1, \cdots, n-1 \}$. If $\square$ is to the right
	of $\la$, such as the cell in the outer boundary of $\la$ with content $2$ in Figure~\ref{Figure.rimHook1}, then it is added to $\phi^*_m \la$
	and so is in $\mu$. If $\square$ is below $\la$ but not to the right of $\la$, such as the cell in the outer boundary of $\la$ with content $3$
	in Figure~\ref{Figure.rimHook1}, then $\square$
	must be in the outer boundary of $\phi^*_m \la$ and must be added to $\phi_n \phi^*_m \la$ and so is in $\mu$. If $\square$ is not to the
	right of $\la$ or below $\la$, such as the cell in the outer boundary of $\la$ with content $1$ in Figure~\ref{Figure.rimHook1}, then there
	must be a row of $\la$ whose rightmost cell has content equal to the content of $\square$. In this
	case a cell must be added to this row in $\phi^*_m \la$ and so $\square$ lies below $\phi^*_m$ and hence is added to $\mu$. In each case
	$\square$ is contained in $\mu$. Since there are $n - m$ such cells and also since $|\mu| - |\la| = n - m$, the result follows.
	
	Now we have $B_n B^\perp_{-m} s_\la = (-1)^{r_n(\phi^*_m \la) + r^*_m(\la)} s_\mu$. Since each cell removed in $\mu$ must have been added
	to $\phi^*_m \la$ we see that $r_n(\phi^*_m \la) - r^*_m(\la)$ is the number of cells in $\mu$ that were added in $\phi^*_m \la$. Each of these
	cells must necessarily be in their own row of $\mu / \la$ and with the exception of the row in $\mu / \la$ containing the cell with content
	$m$, each such row must have a cell that was added in $\phi^*_m \la$. Thus, the number of cells in $\mu$ that were added in $\phi^*_m \la$ is
	one less than the number of rows in $\mu / \la$, so $r_n(\phi^*_m \la) - r^*_m(\la) = ht(\mu / \la)$, completing the proof for the case $n>m$. 
	
	The case that $m > n$ follows similarly to the above and so we shall not include it. If $n = m$ then by the same argument as above, $\la$ and $\phi_n \phi^*_m \la$ must have the same outer boundary and so $\phi_n \phi^*_m \la = \la$. Also, since $\phi^*_m \la$ is defined, the cell in the outer boundary of $\la$ with content $m$ must be to the right of $\la$, giving the result in the case $n=m$.     
\end{proof}

The following result is an immediate corollary of the above theorem, stated in terms of the generating series $B$ and $B^{\perp}$ for the Bernstein and adjoint Bernstein operators. For a skew partition $\la / \mu$, we use the notation $\si(\la / \mu) = \sum_{\square \in \la / \mu} c(\square) = \sum_{c \in \bc(\la / \mu)} c$.

\begin{corollary}\label{allUD.generatingSeries}
	For any positive integer $k$ and any partition $\la$,
		\[ [t^k] q^{-\frac{1}{2}} B(tq^{\frac{1}{2}}) B^\perp (tq^{-\frac{1}{2}}) s_\la = \sum_\mu (-1)^{ht(\mu / \la)} q^{\frac{\si(\mu/\la)}{k}} s_\mu, \]
		\[ [t^{-k}] q^{-\frac{1}{2}} B(tq^{\frac{1}{2}}) B^\perp (tq^{-\frac{1}{2}}) s_\la = \sum_\mu (-1)^{ht(\la / \mu)} q^{\frac{\si(\la/\mu)}{k}} s_\mu, \]
	where the first sum is over all $\mu$ such that $\mu / \la$ is a rim hook of length $k$ and the second sum is over all $\mu$ such that $\la / \mu$ is a rim
	hook of length $k$. Also,
		\[ [t^0] q^{-\frac{1}{2}} B(tq^{\frac{1}{2}}) B^\perp (tq^{-\frac{1}{2}}) s_\la = \sum_{m=1}^{\ell(\la)} q^{\la_m - m + 1/2} s_\la. \]
\end{corollary}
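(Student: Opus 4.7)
The plan is a direct computation: expand the operator product as a doubly-indexed series in $t$, extract the relevant coefficients, and apply Theorem~\ref{RimHook.Action} term by term. Writing
\[
q^{-1/2} B(tq^{1/2}) B^\perp(tq^{-1/2}) = \sum_{n, m' \in \bbZ} B_n B^\perp_{m'}\, t^{n + m'}\, q^{(n - m' - 1)/2},
\]
each coefficient of $t$ reduces to a sum of operator compositions, each weighted by a $q$-power. To match the hypotheses of Theorem~\ref{RimHook.Action}, I would substitute $m = -m'$ so that the composition takes the form $B_n B^\perp_{-m}$ required by the theorem.

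For $[t^k]$ with $k \geq 1$, the constraint $n + m' = k$ becomes $n - m = k$, placing us in the regime $n > m$ covered by the first bullet of Theorem~\ref{RimHook.Action}. Each non-zero term equals $(-1)^{\hgt(\mu/\la)} s_\mu$, where $\mu/\la$ is a rim hook of length $k$ with $\bc(\mu/\la) = \{m, m+1, \ldots, n-1\}$. The elementary sum of consecutive integers gives $\si(\mu/\la) = k(n+m-1)/2$, so $\si(\mu/\la)/k = (n+m-1)/2$, matching the $q$-exponent $(n - m' - 1)/2$ exactly. The content set of a length-$k$ rim hook is a block of $k$ consecutive integers, so $(n,m)$ is recoverable from $\mu/\la$; hence summing over $n$ is equivalent to summing over all $\mu$ with $\mu/\la$ a rim hook of length $k$, proving the first identity. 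The case $[t^{-k}]$ is exactly symmetric: the constraint gives $m - n = k$ with $m > n$, invoking the second bullet of the theorem, and the same arithmetic yields $\si(\la/\mu)/k$ as the $q$-exponent.

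For $[t^0]$, the constraint $m' = -n$ collapses the coefficient to $\sum_n B_n B^\perp_{-n}\, q^{n - 1/2}$. By the third bullet of Theorem~\ref{RimHook.Action}, each non-zero term equals $s_\la$, and the non-zero indices $n$ correspond precisely to contents of cells in the outer boundary of $\la$ that lie to the right of $\la$. Identifying these cells as $(m, \la_m + 1)$ for $m = 1, \ldots, \ell(\la)$, each with content $\la_m - m + 1$, produces the listed $q$-exponents $\la_m - m + 1/2$. The only real obstacle across all three cases is the algebraic bookkeeping---confirming that the arithmetic-progression identity for the contents of a rim hook aligns exactly with the $q$-exponent arising from the generating series---which is a short elementary check once the operator expansion is in place.
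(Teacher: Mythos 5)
Your proposal is correct and follows essentially the same route as the paper's proof: expand the product as a double sum over Bernstein indices, extract the coefficient of $t^{\pm k}$, apply Theorem~\ref{RimHook.Action} termwise, and check that the arithmetic-progression sum of the rim-hook contents reproduces the $q$-exponent (your $(n+m-1)/2$ equals the paper's $n-\tfrac{k+1}{2}$ after the substitution $m=n-k$). The $t^{0}$ and $t^{-k}$ cases are handled at the same level of detail as the paper's own one-line remark that they ``follow similarly,'' so nothing further is needed.
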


\begin{proof}
	Extracting coefficients, we have,
		\[ [t^k] q^{-\frac{1}{2}} B(tq^{\frac{1}{2}}) B^\perp (tq^{-\frac{1}{2}}) s_\la = \sum_{n \in \bbZ} q^{n - \frac{k+1}{2}} B_n B_{k-n}^\perp s_\la. \]
	Suppose for some $n \in \bbZ$, $B_n B^\perp_{k-n} s_\la \not = 0$. Then from Theorem~\ref{RimHook.Action} we have 
	$B_n B_{k-n}^\perp s_\la = (-1)^{ht(\mu / \la)} s_\mu,$
	where $\mu / \la$ is a rim hook of length $k$ and where $\bc(\mu / \la) = \{n-k, n-k+1, \cdots, n-1\}$. Now,
	$\si(\mu / \la) = \sum_{c \in \bc(\mu / \la)} c = k(n-k) + \binom{k}{2},$
	so, $\frac{\si(\mu / \la)}{k} = n - k + \frac{k-1}{2} = n - \frac{k+1}{2}$ as required. 
	
The expressions for the coefficients of $t^{-k}$ and $t^0$ follow similarly.
\end{proof}

We are now able to construct the $\cU$ and $\cD$ operators in terms of the Bernstein operators. We express our result in terms of the generating series $\cU(x) = \sum_{k \geq 0} \cU_k \frac{x^k}{k!}$ and $\cD(x) = \sum_{k \geq 0} \cD_k \frac{x^k}{k!}$.

\begin{theorem}\label{UD.generatingSeries}
Let
		\[ V(q, t, \bfp, \bfpp) = \exp\left(\sum_{k \geq 1} (q^\frac{k}{2} - q^{-\frac{k}{2}}) p_k \frac{t^k}{k}\right)
										\exp\left(\sum_{k \geq 1} (q^\frac{k}{2}-q^{-\frac{k}{2}}) p_k^\perp \frac{t^{-k}}{k}\right). \]
Then we have
		\[ \cU(x) = [t^1] \frac{1}{e^{\frac{x}{2}} - e^{-\frac{x}{2}}} V(e^x, t, \bfp, \bfpp), 
							\qquad \cD(x) = [t^{-1}] \frac{1}{e^{\frac{x}{2}} - e^{-\frac{x}{2}}} V(e^x, t, \bfp, \bfpp), \]
\end{theorem}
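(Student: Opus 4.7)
The plan is to identify $V(q,t,\bfp,\bfpp)$, up to a scalar factor, with the normally-ordered form of the product $B(tq^{1/2})B^\perp(tq^{-1/2})$ already appearing in Corollary~\ref{allUD.generatingSeries}, and then simply read off the coefficients of $t^{\pm 1}$.

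The first step is to bring the four exponential factors of $B(tq^{1/2})B^\perp(tq^{-1/2})$ into the order ``$p_k$'s first, $p_k^\perp$'s last''. Setting
\[
A_+ = \sum_{k \ge 1} \frac{t^k q^{k/2}}{k}\, p_k, \qquad A_- = -\sum_{k \ge 1} \frac{t^{-k} q^{-k/2}}{k}\, p_k^\perp,
\]
\[
B_+ = -\sum_{k \ge 1} \frac{t^k q^{-k/2}}{k}\, p_k, \qquad B_- = \sum_{k \ge 1} \frac{t^{-k} q^{k/2}}{k}\, p_k^\perp,
\]
the product becomes $e^{A_+} e^{A_-} e^{B_+} e^{B_-}$, and the issue is to commute $e^{A_-}$ past $e^{B_+}$. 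Since $p_k^\perp = k\,\partial/\partial p_k$ implies $[p_k^\perp,p_j] = k\,\de_{k,j}$, one computes $[A_-, B_+] = \sum_{k \ge 1} q^{-k}/k = -\log(1-q^{-1})$, which is a scalar in the formal ring. The two-term Baker-Campbell-Hausdorff identity (valid because $[A_-,B_+]$ is central) then gives $e^{A_-}e^{B_+} = \frac{q}{q-1}\, e^{B_+}e^{A_-}$. Since $A_+$ commutes with $B_+$ and $A_-$ commutes with $B_-$, the remaining exponentials merge and one reads off
\[
q^{-1/2}\, B(tq^{1/2})\, B^\perp(tq^{-1/2}) \;=\; \frac{1}{q^{1/2}-q^{-1/2}}\, V(q,t,\bfp,\bfpp),
\]
using $q-1 = q^{1/2}(q^{1/2}-q^{-1/2})$.

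With this identification in hand, I would specialize $q = e^x$ and invoke Corollary~\ref{allUD.generatingSeries} with $k=1$: a rim hook of length~$1$ is a single cell, so $\hgt = 0$ and $\si = c(\Box)$, giving
\[
[t^1]\, \frac{1}{e^{x/2}-e^{-x/2}}\, V(e^x, t, \bfp, \bfpp)\, s_\la \;=\; \sum_{\mu = \la + \Box} e^{x\, c(\Box)}\, s_\mu.
\]
Expanding $e^{x c(\Box)} = \sum_{k \ge 0} c(\Box)^k\, x^k/k!$ and comparing with the defining relation~(\ref{defUkDk}) shows the coefficient of $x^k/k!$ on the right is precisely $\cU_k s_\la$, which proves the formula for $\cU(x)$. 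The formula for $\cD(x)$ follows by the same argument, extracting $[t^{-1}]$ and applying the second clause of Corollary~\ref{allUD.generatingSeries}.

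The main obstacle is the normal-ordering step: one must verify that $[A_-, B_+]$ is genuinely a scalar (rather than an operator involving the $p_k$'s or $p_k^\perp$'s), so that the two-term BCH identity $e^X e^Y = e^Y e^X\, e^{[X,Y]}$ applies without higher-order commutator corrections, and one must carefully track the cancellations in the $q^{\pm k/2}$ and $t^{\pm k}$ exponents as the remaining four exponentials collapse into the two defining exponentials of $V$. Once the normal-ordering identity is established, the rest of the proof is a routine application of results already proved in this section.
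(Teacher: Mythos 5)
Your proposal is correct and follows essentially the same route as the paper: both reduce the statement to the normal-ordering identity $q^{-1/2}B(tq^{1/2})B^\perp(tq^{-1/2}) = (q^{1/2}-q^{-1/2})^{-1}V(q,t,\bfp,\bfpp)$ and then read off the $t^{\pm 1}$ coefficients via the length-one rim-hook case of Corollary~\ref{allUD.generatingSeries}. The only cosmetic difference is that you justify the commutation of the $p_k^\perp$-exponential past the $p_k$-exponential by BCH with central commutator, whereas the paper derives the same scalar factor $\exp\bigl(\sum_{k\ge 1}q^{-k}/k\bigr)=q/(q-1)$ from a Leibniz-rule operator identity; the computation is identical.
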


\begin{proof}
	First note that Corollary~\ref{allUD.generatingSeries} implies that
		\[ \cU(x) = [t^1] e^{-x/2} B(te^{x/2})B^\perp(te^{-x/2}), \qquad \cD(x) = [t^{-1}] e^{-x/2} B(te^{x/2})B^\perp(te^{-x/2}). \]
	
Now, for formal power series $f,g$ in the indeterminate $z$, and $D=a\frac{d}{dz}$ where $a$ is a scalar, Liebniz rule gives the operator identity $e^Df= (e^Df) e^D$. Also, if $b$ is a scalar, then we have $e^D e^{bz}= e^{ab}e^{bz}$. Combining these results using indeterminates $p_1,p_2,\ld$, and noting that $\pp_k p_m=0$ for $k,\neq m$, we obtain
\[ \exp \left( \sum_{k\ge 1} a_k \pp_k \right) \exp \left( \sum_{m\ge 1} b_m p_m \right) = \exp\left( \sum_{k\ge 1} k a_k b_k \right)  \exp \left( \sum_{m\ge 1} b_m p_m \right) \exp \left( \sum_{k\ge 1} a_k \pp_k \right),  \]
for scalars $a_k,b_k,\; k\ge 1$. Thus we have
		\begin{align*}
			q^{-\frac{1}{2}} & B(tq^{\frac{1}{2}}) B^\perp (tq^{-\frac{1}{2}}) \\
					&= q^{-\frac{1}{2}}\exp\left( \sum_{k \geq 1} \frac{q^{\frac{k}{2}} t^{k}}{k} p_k \right)
						\exp\left( - \sum_{k \geq 1} \frac{q^{-\frac{k}{2}} t^{-k}}{k} p_k^\perp \right)
						\exp\left( -\sum_{m \geq 1} \frac{q^{-\frac{m}{2}} t^m}{m} p_m \right)
						\exp\left( \sum_{m \geq 1} \frac{q^{\frac{m}{2}} t^{-m}}{m} p_m^\perp \right) \\
					&=  q^{-\frac{1}{2}} \exp\left(\sum_{k \geq 1} k \frac{q^{-k}}{k^2} \right) V(q, t, \bfp, \bfpp) = \frac{1}{ q^{\frac{1}{2}} - q^{-\frac{1}{2}} } V(q, t, \bfp, \bfpp),
		\end{align*}
		and the result follows with $q=e^x$.
\end{proof}

The series $V$ that appears in Theorem~\ref{UD.generatingSeries} has previously appeared in~\cite{lt}, in connection with a different problem that also concerned Jucys-Murphy elements. The first consequence of this result concerns the form of the operators $\cU_k$ and $\cD_k$ as partial differential operators in the elements of $\bp$.

\begin{corollary}\label{Ukform}
For $k\ge 1$, we have $\cU_k = \sum_{h\ge 0}  \cU_k^{(h)}$, where
\[ \cU_k^{(h)} = \sum_{ \substack{ \ga,\al \in \cP, \\ l(\ga), l(\al) \ge 1, \\ l(\ga) + l(\al) = k+1-2h,\\ | \ga |=| \al |+1 }} \!\!\!\!\!\!\!\! c(\ga,\al)\,p_{\ga}\, \pp_{\al},\]
for some scalars $c(\ga,\al)$.
\end{corollary}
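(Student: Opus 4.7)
The plan is to read off the $\cU_k$ coefficients directly from the generating-series identity
\[ \cU(x) = [t^1]\,\frac{1}{e^{x/2}-e^{-x/2}}\,V(e^x,t,\bp,\bfpp) \]
provided by Theorem~\ref{UD.generatingSeries}. Since the two exponentials in the definition of $V$ involve disjoint sets of variables (the $p_k$'s on one side, the $\pp_k$'s on the other), no commutator corrections arise when expanding each as a standard formal power series, and for partitions $\ga=1^{m_1}2^{m_2}\cdots$ and $\al=1^{n_1}2^{n_2}\cdots$ one reads off at once
\[ [p_\ga\,\pp_\al]\,V(e^x,t,\bp,\bfpp) = t^{|\ga|-|\al|}\prod_{k\ge 1}\frac{(e^{kx/2}-e^{-kx/2})^{m_k+n_k}}{m_k!\,n_k!\,k^{m_k+n_k}}. \]
The extraction $[t^1]$ immediately forces $|\ga|=|\al|+1$, matching the weight condition stated in the corollary.

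Dividing by $e^{x/2}-e^{-x/2}=2\sinh(x/2)$, the coefficient of $p_\ga\pp_\al$ in $\cU(x)$ becomes
\[ C_{\ga,\al}(x) := \frac{1}{2\sinh(x/2)}\prod_{k\ge 1}\frac{(2\sinh(kx/2))^{m_k+n_k}}{m_k!\,n_k!\,k^{m_k+n_k}}. \]
The key elementary facts I would use are that $2\sinh(kx/2)$ is an odd power series in $x$ whose lowest-order term is $kx$. From these it follows that the numerator has lowest-order term of degree $\sum_k(m_k+n_k)=l(\ga)+l(\al)$ in $x$, and has the parity (as a function of $x$) of $l(\ga)+l(\al)$. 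Dividing by the odd function $2\sinh(x/2)$ therefore produces a formal power series $C_{\ga,\al}(x)$ whose lowest-order term is of degree $l(\ga)+l(\al)-1$ and whose parity is opposite; equivalently, every exponent $j$ appearing in $C_{\ga,\al}(x)$ satisfies $j\equiv l(\ga)+l(\al)-1\pmod 2$.

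Finally, setting $c(\ga,\al):=k!\,[x^k]\,C_{\ga,\al}(x)$, the parity and leading-order analysis above shows $c(\ga,\al)=0$ unless $k\ge l(\ga)+l(\al)-1$ and $k-(l(\ga)+l(\al)-1)$ is a non-negative even integer, i.e., unless $l(\ga)+l(\al)=k+1-2h$ for some $h\ge 0$. Grouping the contributing pairs $(\ga,\al)$ by the corresponding value of $h$ yields the decomposition $\cU_k=\sum_{h\ge 0}\cU_k^{(h)}$ in the form claimed. The only pair with $l(\al)=0$ compatible with $|\ga|=|\al|+1$ is $\ga=(1),\,\al=\emptyset$, for which $C_{\ga,\al}(x)=1$; this contributes only to $\cU_0$, so for $k\ge 1$ the restriction $l(\ga),l(\al)\ge 1$ appearing in the corollary statement is automatic. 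The main (still mild) technical point is the parity and leading-order bookkeeping for the $\sinh$ factors; beyond that the argument is mechanical once the coefficient of $p_\ga\pp_\al$ has been extracted from Theorem~\ref{UD.generatingSeries}.
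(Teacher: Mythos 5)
Your proposal is correct and follows essentially the same route as the paper: both start from Theorem~\ref{UD.generatingSeries}, note that each $p_k t^k$ and $\pp_k t^{-k}$ enters with the odd series $\tfrac{2}{k}\sinh(kx/2)$ beginning at $x^1$ while the prefactor is $x^{-1}$ times an even invertible series, and conclude by the degree/parity bookkeeping in $x$ (together with $[t^1]$ forcing $|\ga|=|\al|+1$). The paper leaves this last step as ``straightforward''; you have simply written it out in full, including the harmless $\ga=(1),\ \al=\emptyset$ edge case.
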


\begin{proof}
{}From Theorem~\ref{UD.generatingSeries} we obtain
\begin{align*}
\cU_k &= [x^k t^1] \frac{\tfrac{1}{2}}{\sinh \tfrac{x}{2}} \exp\left( \sum_{k\ge 1} \frac{\sinh \tfrac{kx}{2}}{\tfrac{k}{2}}p_k t^k\right)
 \exp\left( \sum_{k\ge 1} \frac{\sinh \tfrac{kx}{2}}{\tfrac{k}{2}}\pp_k t^{-k}\right)\\
 &= [x^kt^1] \frac{1}{x\sum_{i\ge 0} \frac{x^{2i}}{2^{2i}(2i+1)!}} \exp\left(\sum_{i\ge 0}\frac{x^{2i+1}}{2^{2i}(2i+1)!}\sum_{k\ge 1} k^{2i}p_k t^k  \right) \exp\left(\sum_{i\ge 0}\frac{x^{2i+1}}{2^{2i}(2i+1)!}\sum_{k\ge 1} k^{2i}\pp_k t^{-k}  \right),
\end{align*}
and the result follows straightforwardly.
\end{proof}
Using the notation of Section~\ref{sec1}, Corollary~\ref{Ukform} says that  $\cU^{(h)}_k$, $h\ge 0$, and thus $\cU_k$ itself, increases the weight of monomials in the elements of~$\bp$ by~$1$.

Consider the generating series for these operators $\cU_k^{(h)}$ given by $\cU^{(h)}(x)=\sum_{k\ge 0} \cU_k^{(h)}\frac{x^m}{m!}$, for $h\ge 0$. To state explicit expressions for these generating series in a compact form, it will be convenient to use some notation: Let
\[ P(t) = \sum_{k\ge 1} p_k t^k,\qquad \Pp(t) = \sum_{k\ge 1} \pp_k t^k, \qquad Q(t)=P(t)+\Pp(t^{-1}),\]
and define $D=t\frac{d}{dt}$, and then $Q_i = D^iQ(t)$, $i\ge 0$. We also introduce the linear operator $\Om$ that shuffles $p_i$'s to the left and $\pp_i$'s to the right in a monomial as if they commute with each other. Then from the last equality in the proof of Corollary~\ref{Ukform} we obtain
\[ \cU^{(h)}(x) = \Om [w^{2h}t^1] \frac{1}{x\sum_{i\ge 0} \frac{w^{2i}x^{2i}}{2^{2i}(2i+1)!}} \exp\left(\sum_{i\ge 0}\frac{w^{2i}x^{2i+1}}{2^{2i}(2i+1)!} Q_{2i}\right), \]
and evaluating this coefficient for small values of $h$ gives (with $Q=Q_0$)
\begin{align}
\cU^{(0)}(x)&= \Om \, x^{-1}e^{xQ},\label{Useries0}\\
\cU^{(1)}(x)&= \Om \tfrac{1}{24}\left( x^2Q_2 -x \right) e^{xQ},\label{Useries1}\\
\cU^{(2)}(x)&= \Om \tfrac{1}{5760} \left( 5x^5Q_2^2 +3x^4Q_4-10x^4Q_2 - 3x^3\right) e^{xQ}. \notag
\end{align}

For example, by evaluating the coefficient of $\tfrac{x^k}{k!}$ in~(\ref{Useries0}), we immediately obtain the following expression for the operators $\cU_k^{(0)}$.

\begin{corollary}\label{Uopsgenus0}
We have $\cU^{(0)}_0=p_1$, and for $k\ge 1$, we have
$$\cU_k^{(0)}= \sum_{j=1}^{k} \frac{1}{k+1}\binom{k+1}{j}\sum p_{a_1}\cd p_{a_j} \pp_{b_1} \cd \pp_{b_{k+1-j}}, $$
where the inner summation ranges over the set $\cT_{j,k+1-j}$ consisting of all positive integers $a_1,\ld ,a_j,b_1,\ld ,b_{k+1-j}$ subject to the restriction that
$$a_1+\ld +a_j = b_1 + \ld +b_{k-j+1} + 1.$$
\end{corollary}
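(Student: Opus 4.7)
The plan is to expand the closed-form generating series~(\ref{Useries0}), namely $\cU^{(0)}(x) = \Om\, x^{-1} e^{xQ}$, in which the $[t^1]$-extraction from Theorem~\ref{UD.generatingSeries} is still understood. Written out explicitly,
\[ \cU^{(0)}(x) \;=\; \Om\,[t^1]\, x^{-1} e^{xQ(t)}, \qquad Q(t) \;=\; \sum_{k\ge 1}\bigl(p_k t^k + \pp_k t^{-k}\bigr). \]
First I would do the coefficient extraction in $x$. Expanding the exponential, $x^{-1}e^{xQ} = \sum_{n\ge 0}\tfrac{x^{n-1}}{n!}Q^n$, so $[x^k]$ picks out only the $n=k+1$ term, giving
\[ \cU^{(0)}_k \;=\; k!\,[x^k]\cU^{(0)}(x) \;=\; \frac{1}{k+1}\,[t^1]\,\Om\, Q^{k+1}. \]

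Next the computation becomes purely combinatorial. I would expand $Q^{k+1}$ as an ordered $(k+1)$-fold product, parametrizing each summand by (i)~a subset $I\subseteq\{1,\ld,k+1\}$ of positions designated as ``$p$-slots'' (the complement being ``$\pp$-slots''), together with (ii)~a positive integer subscript for each slot. After applying $\Om$—which shuffles $p$'s to the left of $\pp$'s, with no loss of information since the $p_i$'s mutually commute and likewise the $\pp_i$'s—each of the $\binom{k+1}{j}$ subsets $I$ with $|I|=j$ produces the same canonical monomial $p_{a_1}\cd p_{a_j}\pp_{b_1}\cd \pp_{b_{k+1-j}}$ and the same $t$-exponent $\sum_s a_s - \sum_s b_s$. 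Hence
\[ \Om\,Q^{k+1} \;=\; \sum_{j=0}^{k+1}\binom{k+1}{j}\sum_{\substack{a_1,\ld,a_j\ge 1\\ b_1,\ld,b_{k+1-j}\ge 1}} p_{a_1}\cd p_{a_j}\,\pp_{b_1}\cd \pp_{b_{k+1-j}}\;t^{\sum_s a_s - \sum_s b_s}. \]

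Finally, I would apply $[t^1]$, which imposes the single linear relation $a_1+\cd+a_j = b_1+\cd+b_{k+1-j}+1$, precisely the defining condition of $\cT_{j,k+1-j}$. Positivity of the subscripts eliminates $j=0$ (since then $-\sum_s b_s\le -(k+1)<1$) and, for $k\ge 1$, also $j=k+1$ (since then $\sum_s a_s\ge k+1>1$), leaving $1\le j\le k$. Combining with the prefactor $\tfrac{1}{k+1}$ from the first step reproduces exactly the formula of the corollary; the base case $\cU^{(0)}_0=p_1$ drops out of the same calculation as $\cU^{(0)}_0 = [t^1]\,Q(t) = p_1$. There is no genuine obstacle here—the argument is pure bookkeeping—and the one subtlety worth flagging is that the $[t^1]$-extraction from Theorem~\ref{UD.generatingSeries} must be carried along when reading the compact form~(\ref{Useries0}).
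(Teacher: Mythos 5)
Your proposal is correct and is essentially the paper's own proof: the paper simply says the corollary follows ``by evaluating the coefficient of $x^k/k!$'' in~(\ref{Useries0}), and your argument fills in exactly that bookkeeping (extracting $[x^k]$ to get $\tfrac{1}{k+1}[t^1]\Om\,Q^{k+1}$, expanding the $(k+1)$-fold product by choice of $p$-slots, and letting $[t^1]$ impose the condition defining $\cT_{j,k+1-j}$). Your remark that the $[t^1]$-extraction must still be understood in the compact form~(\ref{Useries0}) is a correct and worthwhile clarification of the paper's notation; note the paper also gives an independent combinatorial proof of this corollary in Section~\ref{s6} via Jucys--Murphy elements and the Cycle Lemma, which is a genuinely different route.
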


\section{Grading the $\cU$ and $\cD$ operators by genus}\label{s6}

In Corollary~\ref{Ukform}, we proved that $\cU_k$, as a partial differential operator, increases the weight of monomials in the elements of~$\bp$ by~$1$. Moreover, $\cU_k$ could be written as a sum of monomials in the $p_i$'s and $\pp_i$'s of total degree $k+1-2h$ where $h$ is any nonnegative integer. In addition, we gathered together the monomials for each such choice of $h$ into an operator that we called $\cU_k^{(h)}$, $h\ge 0$. In this section we focus on this nonnegative integer $h$, and prove that it actually can be considered as a parameter of genus; so from this point of view the operator $\cU_k^{(h)}$ is the genus $h$ portion of $\cU_k$. To do so, we will again consider the Jucys-Murphy elements that were defined in~(\ref{JMdefn}). The derivation that follows is very similar to the construction of the join-cut operator for star factorizations constructed in \cite{gj3}.

Consider any $n\ge 1$ and any permutation $\si\in\cS_n$ (but depending on the context, we'll also use $\si$ for the corresponding element of the group algebra). Let $\sipr$ denote the permutation in $\cS_{M}$ in which $M = n+1$ is a fixed point, and in which $\sipr(i)=\si(i)$ for all $i=1,\ld ,n$. For any permutation in $\cS_M$, we refer to the cycle containing the maximum element $M$ as the {\em $M$-$\,$cycle} of that permutation, so, e.g., the $M$-$\,$cycle of $\sipr$ has length $1$. The key fact that we use relates the operator $\cU_k$ and the Jucys-Murphy element $J_M$ via
\begin{equation}\label{UkJM}
\cU_k \, p_{\cyc(\si)} = \sum_{\tau\in J^k_{M}\sipr} p_{\cyc(\tau)},
\end{equation}
where $\cyc$ was defined in Section~\ref{s3}. Though we have been unable to find equation~(\ref{UkJM}) stated explicitly in this form, it seems clear that it is well known to a number of authors, e.g., for closely related work see the original papers of Jucys~\cite{ju} and Murphy~\cite{mu}, as well as Diaconis and Greene~\cite{dg}), F\'eray~\cite{f1}, Lascoux and Thibon~\cite{lt}, Lassalle~\cite{l1} and Okounkov~\cite{ok}.

Thus we will consider $J^k_{M}\sipr$, and specifically the product
\begin{equation}\label{JMproduct}
\tau = \ka_1 \cd \ka_k\;\sipr,\qquad \qquad \ka_i=(c_i\; M), \quad 1\le c_i \le M-1, \quad i=1,\ld ,k.
\end{equation} 
We show below that the genus of the corresponding branched cover $(\ka_k,\ld ,\ka_1,\sipr)$ (as discussed in Section~\ref{s3}) is precisely $h$, and that it contributes a monomial of total degree $k+1-2h$ in the $p_i$'s and the $\pp_i$'s to $\cU_k$, thus establishing the genus interpretation of the parameter $h$ in $\cU_k^{(h)}$. To do so, we will consider the product~(\ref{JMproduct}) iteratively, and hence define the partial products $\si^{(0)}=\sipr$, $\;\si^{(i)}= \ka_i \;\si^{(i-1)}$, $i=1,\ld ,k$, so in particular $\tau = \si^{(k)}$. Let the length of the $M$-$\,$cycle in $\si^{(i)}$ be $s_i\ge 1$, for $i=0,\ld ,k$. We also define the branched covers $\cB^{(i)}= (\ka_i,\ld ,\ka_1,\si)$, for $i=0,\ld ,k$. From~(\ref{UkJM}), to determine the action of the operator $\cU_k$, we need to determine the lengths of the cycles in the disjoint cycle representation of $\si^{(k)}$ in terms of the length of the cycles in $\si$. Throughout our iteration, we will refer to the cycles of $\si$ as {\em initial} cycles.
\vspace{.1in}

\noindent
\textbf{Iteration:} At stage $i$, for $i=0,\ld ,k$, we have the permutation $\si^{(i)}$, and we introduce a subset $\cH^{(i)}$ of the stages that we consider to be \emph{marked}, for purposes that will become clear; at stage $0$, $\cH^{(0)}$ is the empty set, and we will iteratively update $\cH^{(i)}$ as we proceed through the stages. At stage $0$, the $M$-$\,$cycle of $\si^{(0)}$ has length $s_0=1$, and the other cycles are the initial cycles. Thus, if $k=0$, we immediately have $\cU_0 = \cU^{(0)}_0=p_1$. If $k\ge 1$, then for $i=1,\ld ,k$, at stage $i$ there are two possibilities for the product $\si^{(i)}= \ka_i \; \si^{(i-1)}$: either (a)  $c_i$ is contained in the $M$-$\,$cycle of $\si^{(i-1)}$, or (b) it is not. For (a), then in the product the $M$-$\,$cycle of $\si^{(i-1)}$ is {\em cut} into two cycles - the $M$-$\,$cycle of $\si^{(i)}$ and another cycle, of length $d_i\ge 1$, which we will refer to as a \emph{spare} cycle. For (b), then in the product the $M$-$\,$cycle of $\si^{(i-1)}$ is {\em joined} to another cycle of $\si^{(i-1)}$, of length $e_i\ge 1$, to create the $M$-$\,$cycle of $\si^{(i)}$. Note that for (a) we have $d_i=s_{i-1}-s_i\ge 1$, and for (b) we have $e_i=s_i-s_{i-1}\ge 1$. Thus $\si^{(i)}$ has an $M$-\,cycle together with a (possibly empty) collection of spare cycles (created at some of stages $1,\ld ,i$ by cuts from the $M$-\,cycle), and all other cycles (if any) are initial cycles - more formally the latter initial cycles are all the cycles of $\si$ whose elements are not contained in the orbit of $M$ in $\langle \ka_i,\ld ,\ka_1,\si \rangle$.

Now we go back to case (b) of the iteration described above, and consider two subcases: either (i) the $M$-$\,$cycle of $\si^{(i-1)}$ is joined to an initial cycle of $\si^{(i-1)}$, or (ii) the $M$-$\,$cycle of $\si^{(i-1)}$ is joined to a spare cycle of $\si^{(i-1)}$. In subcase (i), then the genus of the branched   cover $\cB^{(i)}$ is the same as the genus of the branched cover $\cB^{(i-1)}$; in subcase (ii), the genus of the branched cover $\cB^{(i)}$ is $1$ greater than the genus of the branched cover $\cB^{(i-1)}$. That is, in subcase (ii) we are simply rejoining a cycle that was cut from the $M$-\,cycle at a previous stage $i^{\prime}$ to the $M$-\,cycle (though the join point may be different from the cut point). In subcase (ii) we update $\cH^{(i)}$ by $\cH^{(i)} = \cH^{(i-1)} \cup \{ i^{\prime},i \}$, and note that
\begin{equation}\label{produnmarked}
\cyc(\si^{(i)}) = \cyc( ( \!\!\!\!\prod_{\substack{ j\in [i],\\ j \notin \{ i^{\prime},i \} } } \!\!\!\! \ka_j ) \, \sipr ), \qquad\qquad \mbox{ and } e_i = d_{i^{\prime}}.
\end{equation}
Finally, we regard the $M$-$\,$cycle of $\si^{(k)}$ as a cycle that is created  in our iteration at an additional, artificial, $k+1$st stage, and thus define $s_{k+1} =0$, $d_{k+1}=s_{k}$. We then have $d_i=s_{i-1}-s_i\ge 1$ for $i=k+1$.
\vspace{.1in}

At the termination of our iteration, we can conclude from~(\ref{produnmarked}) by induction on the stages that
\[ \cyc(\si^{(k)}) = \cyc( ( \!\!\!\!\prod_{\substack{ j\in [k],\\ j \notin \cH^{(k)} } } \!\!\!\! \ka_j ) \, \sipr ),\]
and that if $|\cH^{(k)}|=2h$, then the genus of $\cB^{(k)}$ is $h$. But in this case we create a spare cycle at exactly $j$ of the $k+1-2h$ unmarked stages, and let the values of the corresponding $d_i$'s be given by $a_1,\ld ,a_j$; we also join an initial cycle to the $M$-$\,$cycle at the remaining $k+1-j-2h$ of the unmarked stages, and let the values of the corresponding $e_i$'s be given by $b_1,\ld ,b_{k+1-j -2h}$. Note that $a_1,\ld ,a_j,b_1,\ld ,b_{k+1-j-2h}\ge 1$, and
\[ \left( b_1 + \ld +b_{k+1-j-2h}\right) -  \left( a_1+\ld +a_j\right) = \sum_{ i\in[k+1]\setminus \cU^{(k)} }(s_{i}-s_{i-1})= \sum_{i\in[k+1]}(s_{i}-s_{i-1})= s_{k+1}-s_{0} = -1. \]
Thus in this case $\si^{(k)}$ contributes the monomial operator $p_{a_1}\cd p_{a_j} \pp_{b_1} \cd \pp_{b_{k+1-j-2h}}$ (multiplied by some scalar coefficient) to the operator $\cU^{(h)}_k$, and comparison with Corollary~\ref{Ukform} gives us the desired conclusion, that from the Jucys-Murphy/branched cover point of view, $\cU^{(h)}_k$ is the genus $h$ portion of $\cU_k$.

For the particular case of genus $0$ we can determine the scalar coefficients of the monomial operators in $\cU^{(0)}_k$ by a more detailed analysis, which gives us the following combinatorial proof of Corollary~\ref{Uopsgenus0}.
\vspace{.1in}

\noindent
\textbf{Combinatorial proof of Corollary~\ref{Uopsgenus0}:} We consider only the products $\si^{(k)}$ that have no marked stages in the above iteration, and thus for which the M-$\,$cycle is only ever joined to initial cycles. Thus as above we create a cycle in the iteration above at exactly $j$ of the $k+1$ stages, where $1\le j\le k$, and let the values of the corresponding $d_i$'s be given by $a_1,\ld ,a_j$. Then we join a cycle to the $M$-$\,$cycle at the remaining $k+1-j$ stages, and let the values of the corresponding $e_i$'s be given by $b_1,\ld ,b_{k+1-j}$. As above in the case $h=0$ we have $a_1,\ld ,a_j,b_1,\ld ,b_{k+1-j}\ge 1$, and
 \[ \left( b_1 + \ld +b_{k-j+1}\right) -  \left( a_1+\ld +a_j\right) = \sum_{i=1}^{k+1} (s_{i}-s_{i-1})= s_{k+1}-s_{0} = -1. \]
Note also that for $i=1,\ld ,k$, the $i$ th partial sum of the ordered $-a$'s and $b$'s is given by $s_i-s_0=s_i-1\ge 0$. But there are $\binom{k+1}{j}$ possible ordered stages with $j$ $d_i$'s and $k+1-j$ $e_i$'s, and the action on the product $p_{\cyc(\si)}$ in genus $0$ is described by the operator
\begin{equation}\label{opwocondn}
\sum_{j=1}^{k} \binom{k+1}{j}\sum p_{a_1}\cd p_{a_j} \pp_{b_1} \cd \pp_{b_{k+1-j}}, 
\end{equation} 
where the inner summation ranges over $\cT_{j,k+1-j}$, if we allow all possible orders. However, we have to account for the condition that all of the partial sums of the $-a$'s and $b$'s are nonnegative.

Now, for any $m$-tuple of integers ${\bf q}=(q_1,\ld ,q_m)$ let the $i$th cyclic shift of ${\bf q}$ be given by ${\bf q}^{(i)}=(q_{i+1},\ld ,q_m,$ $q_1,\ld ,q_i)$, for $i=1,\ld ,m-1$. Then if the sum of the elements of ${\bf q}$ is $-1$, it is well known (by what is often referred to as the {\em Cycle Lemma}) that among the $m$ $m$-tuples given by ${\bf q}$ together with its $m-1$ cyclic shifts, all of the $m$-tuples are distinct, and, moreover, exactly one of the $m$-tuples has partial sums that are all nonnegative. Thus, in the operator~(\ref{opwocondn}), to account for the nonnegative partial sum condition on the $k+1$-tuples in the summation set, we must divide by $k+1$, completing our combinatorial proof of Corollary~\ref{Uopsgenus0}.
\hfill$\square$

\section{A partial differential equation for the content series in genus $0$}\label{s8}
\subsection{The partial differential equation in genus $0$}

In the following result we give a partial differential equation for the genus $0$ portion of the content series. In the terminology of Section~\ref{sec1}, note that on the left hand side the differential operators increase the weight of monomials in the elements of~$\bp$ by~$1$, while on the right hand side, this weight is preserved. The increase in weight on the left hand side reflects the combinatorial fact described in Section~\ref{s6}, that after inserting $n+1$ as a fixed point, we have moved from the symmetric group $\cS_n$ to $\cS_{n+1}$ in the multiplication by the Jucys-Murphy element $J_{n+1}$.
\begin{theorem}\label{genus0pde}
Let $\whPsi^{f(x)}_0 = \whPsi^{f(x)}_0(z,\bp) =\Psi^{f(x)}_0(1,z,\bp)$. Then $\whPsi^{f(x)}$ is the unique solution to the partial differential equation
\begin{equation}\label{whPsipde}
z p_1+ z\sum_{k\ge 1} f_k \sum_{j=1}^k \binom{k+1}{j} \sum_{\ell\ge 0}\sum_{\substack{a_1,\ld ,a_j\ge 1\\a_1+\ld +a_j=\ell+1}} \!\!\!\!\!\! p_{a_1}\cd p_{a_j} [u^{\ell}] \left(  \sum_{i\ge 1} \left( \pp_i \whPsi^{f(x)}_0 \right) u^i  \right)^{k+1-j} \!\!\!\!\!\!\!\! =   \sum_{i\ge 1}p_i \pp_i \whPsi^{f(x)}_0
\end{equation}
that satisfies the initial condition $\whPsi^{f(x)}_0(0,\bp)=0$.
 \end{theorem}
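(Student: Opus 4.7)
The plan is to derive the equation by projecting the pde of Corollary~\ref{Phifpde} onto its genus--zero component at $y=1$, using the genus decomposition of the $\cU$--operators from Sections~\ref{UDsec} and~\ref{s6} together with the explicit form of $\cU_k^{(0)}$ in Corollary~\ref{Uopsgenus0}. Starting from Corollary~\ref{Phifpde} and writing $\Phi^{f(x)}=\exp(\Psi^{f(x)})$ (Corollary~\ref{logPhif}), division by $\Phi^{f(x)}$ produces the operator identity
\begin{equation*}
\sum_{i\ge 0}f_i y^i\,e^{-\Psi^{f(x)}}\,\cU_i\,e^{\Psi^{f(x)}} \;=\; \partial_z\Psi^{f(x)}.
\end{equation*}
I would then substitute $\cU_i=\sum_{h\ge 0}\cU_i^{(h)}$, writing each $\cU_i^{(h)}$ as a sum of monomial operators $c(\gamma,\alpha)\,p_\gamma\pp_\alpha$ with $l(\gamma)+l(\alpha)=i+1-2h$ and $|\gamma|=|\alpha|+1$, and apply the Leibniz (exponential) formula
\begin{equation*}
e^{-\Psi^{f(x)}}\,p_\gamma\pp_\alpha\,e^{\Psi^{f(x)}} \;=\; p_\gamma\sum_{\pi}\prod_{B\in\pi}\pp_{\alpha_B}\Psi^{f(x)},
\end{equation*}
where $\pi$ ranges over set partitions of $\{1,\ld,l(\alpha)\}$ and $\pp_{\alpha_B}=\prod_{j\in B}\pp_{\alpha_j}$. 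Decomposing $\Psi^{f(x)}=\sum_g\Psi^{f(x)}_g$ as in Corollary~\ref{logPhif} then re--expresses the LHS as a sum indexed by quadruples $(i,h,\pi,(g_B)_{B\in\pi})$.

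The crucial step is a genus analysis. Using the explicit $y$--weighting $y^{n+l(\beta)-2+2g}$ on the $p_\beta z^n/n!$ coefficient of $\Psi^{f(x)}_g$, and tracking how each $\pp_{\alpha_B}$ alters the partition length $l(\beta)$ and the weight $|\beta|$ of the monomials it acts upon, together with the prefactor $y^i$, one computes that each term contributes to a $p_\beta z^N$ coefficient with total $y$--exponent $N+l(\beta)-1+2G$, where
\begin{equation*}
G \;=\; h+l(\alpha)-|\pi|+\sum_{B\in\pi}g_B.
\end{equation*}
This is precisely the $y$--exponent of the $p_\beta z^N$ coefficient of $\partial_z\Psi^{f(x)}_G$ on the right-hand side. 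Since $h\ge 0$, $|\pi|\le l(\alpha)$, and $g_B\ge 0$, the condition $G=0$ forces $h=0$, the partition $\pi$ to consist of singletons, and $g_B=0$ for every block $B$. Hence only $\cU_i^{(0)}$ contributes, only the singleton partition survives in the Leibniz expansion, and each surviving derivative factor reduces to $\pp_b\whPsi^{f(x)}_0$ after setting $y=1$.

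To conclude, I would substitute the explicit form of $\cU_k^{(0)}$ from Corollary~\ref{Uopsgenus0} (with $\cU_0^{(0)}=p_1$), set $y=1$, and multiply through by $z$; collecting by $j=l(\gamma)$ and recognising the inner sum over $b_1,\ld,b_{k+1-j}$ as the coefficient extraction $[u^\ell]\bigl(\sum_{i\ge 1}(\pp_i\whPsi^{f(x)}_0)u^i\bigr)^{k+1-j}$ yields the left-hand side of~(\ref{whPsipde}). The right-hand side follows from the identity $\sum_{i\ge 1}p_i\pp_i = z\partial_z$ acting on $\whPsi^{f(x)}_0$, since $\sum_i p_i\pp_i$ multiplies each monomial $p_\beta z^n$ by $|\beta|=n$. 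The initial condition $\whPsi^{f(x)}_0(0,\bp)=0$ is immediate from the $n\ge 1$ indexing in Corollary~\ref{logPhif}, and uniqueness follows because the pde determines $[z^N]\whPsi^{f(x)}_0$ inductively from lower-order $z$--coefficients together with the initial condition. The main obstacle will be the simultaneous bookkeeping of the $y$--exponent, $l(\beta)$, and $|\beta|$ throughout the Leibniz expansion in order to verify that $G=h+l(\alpha)-|\pi|+\sum g_B$ genuinely coincides with the Riemann--Hurwitz genus of the resulting term, thereby justifying the reduction to $h=0$, singletons, and $\whPsi^{f(x)}_0$.
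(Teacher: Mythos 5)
Your proposal is correct and follows essentially the same route as the paper: starting from Corollary~\ref{Phifpde}, multiplying by $e^{-\Psi^{f(x)}}$, expanding via the product rule over set partitions of the index set of $\pp_{\al}$, and tracking the $y$-exponent to show that each term lands in genus $G=h+l(\al)-|\pi|+\sum_B g_B$ (identically the paper's $s+m-i+g_1+\cdots+g_i$), so that $G=0$ forces $h=0$, singleton blocks, and $g_B=0$, after which Corollary~\ref{Uopsgenus0} gives the explicit form. The only differences are cosmetic (the paper multiplies through by $z$ before extracting the genus-$0$ monomials), so there is nothing further to add.
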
 
 
 \begin{proof}
 We denote the set of partitions of $[m]$ into $i$ nonempty subsets $B= \{ B_1,\ld ,B_i\}$ by $\Pi_{m,i}$. The sets $B_j$ are called the {\em blocks} of $B$. For an integer partition $\al=(\al_1,\ld ,\al_m)$ and a subset $\be\subseteq [m]$ with elements $\be_1< \cd <\be_i$, define $\al(\be)$ to be the partition $\{ \al_{\be_1},\ld ,\al_{\be_i}\}$. Now if $\al$ is a partition with $m$ parts and $F=F(\bp)$ is a formal power series in $\bp$, then by the product rule, we have
 \[   \pp_{\al} e^F = \sum_{i=1}^m \sum_{\substack{\{ B_1,\ld ,B_i\}\\ \in \Pi_{m,i}}} \left( \pp_{\al(B_1)} F  \right) \cd \left( \pp_{\al(B_i)} F  \right) e^F .\]
Now consider this result together with Corollaries~\ref{Phifpde},~\ref{logPhif},~\ref{Ukform} and~\ref{Uopsgenus0}. Note that if we multiply equation~(\ref{fpde}) on both sides by $ze^{-\Psi^{f(x)}}$ (after carrying out the differentiations), then on the LHS we have a linear combination of monomials in $z,y,\bp$ - one of these monomials is $z p_1$, and all other monomials are of the form
\[ S=  z \, y^k \, p_{\ga} \left( \pp_{\al(B_1)} z^{ | \nu_1 | } \, p_{\nu_1} \,y^{ | \nu_1 | + l( \nu_1 ) - 2 + 2g_1 } \right) \cdots  \left(  \pp_{\al(B_i)} z^{ | \nu_i | } \, p_{\nu_i} \, y^{ | \nu_i | + l( \nu_i ) - 2 + 2g_i } \right), \]
where $k \ge 1$, $l(\al)=m$, $m\ge 1$, $l(\ga)= k+1-2s-m\ge 1$, $s \ge 0$, $| \ga |=| \al |+1$, $1\le i\le m$, $| B_1 |,\ld ,| B_i | \ge 1$, $B_1 \sqcup \ld \sqcup B_i=[m]$, $l(\nu_1),\ld ,l(\nu_i)\ge 1$, and $g_1,\ld ,g_i\ge 1$. Thus we have (ignoring the scalar multiple induced by applying the $\pp_i$'s)
\begin{equation}\label{LHSmonom}
S= z^R \, p_{\eta}\, y^N,
\end{equation}
where 
\begin{align*}
| \eta |&= | \ga | + | \nu_1 | + \ld + | \nu_i | - | \al(B_1) | - \ld - | \al(B_i) | , \\
&= | \ga | + | \nu_1 | + \ld + | \nu_i | - | \al | = | \nu_1 | + \ld + | \nu_i | + 1,\\
l( \eta )&=   l( \ga ) + l( \nu_1 ) + \ld + l( \nu_i ) - l( \al(B_1) ) - \ld - l( \al(B_i) ) , \\
&= l( \ga ) + l( \nu_1 ) + \ld + l( \nu_i ) - l( \al ) =  l( \nu_1 ) + \ld + l( \nu_i ) + k + 1 - 2s - 2m, 
\end{align*}
and thus we have
\[ R = 1 + | \nu_1 | + \ld + | \nu_i | = | \eta | , \]
and
\begin{align}
N&= k +  | \nu_1 | + \ld + | \nu_i | +  l( \nu_1 ) + \ld + l( \nu_i ) - 2i + 2g_1 + \ld + 2g_i, \notag \\
&= k + \left( | \eta | - 1 \right) + \left( l( \eta ) - k - 1 + 2s + 2m \right)  - 2i +  2g_1 + \ld + 2g_i , \notag \\
&= | \eta | + l( \eta ) - 2 + 2 \left( s + m - i + g_1 + \ld + g_i  \label{Nformula} \right)  ,
\end{align}
But if we multiply equation~(\ref{fpde}) on both sides by $ze^{-\Psi^{f(x)}}$, then on the RHS we have a linear combination of monomials of the form
\begin{equation}\label{RHSmonom}
z \frac{\partial}{\partial z} z^{ | \eta | } \, p_{\eta} \, y^{ | \eta | + l( \eta ) - 2 + 2g } =   | \eta | \cdot z^{ | \eta | } \, p_{\eta} \, y^{ | \eta | + l( \eta ) - 2 + 2g }, \qquad g\ge 0.
\end{equation}  

Now, suppose that on both sides we consider only monomials of the form $z^{ | \eta | } \, p_{\eta} \, y^{ | \eta | + l( \eta ) - 2}$, for $\eta \in \cP$ with $l(\eta) \ge 1$. Then on the RHS, from~(\ref{RHSmonom}), we must have $g=0$. On the LHS, from~(\ref{LHSmonom}) and~(\ref{Nformula}), we must have $s=g_1= \ld =g_i=0$ and $i=m$, so $| \ga_1 | = \ld = | \ga_i | = 1$. Thus we obtain equation~(\ref{whPsipde}), and the initial condition follows immediately, to complete the proof of the result.
 \end{proof} 
 
 We can obtain pde's that are similar to~(\ref{whPsipde}) for genus up to and including $g$ for each $g\ge 0$, by using the explicit expressions for the $\cU_k^{(g)}$ given in Section~\ref{UDsec}, but they quickly become complicated, and we have not been able to apply such an equation even in the case of genus $1$ to count branched covers in the three special cases.
\subsection{Technical results on symmetric functions for applications of the differential equation} 

In this section we give some technical results for symmetric functions in a finite set of variables $x_1,\ld ,x_n$, that we will use in Section~\ref{s9} to apply Theorem~\ref{genus0pde} for particular cases of the content series. The \emph{complete} symmetric functions are given by $h_0(x_1,\ld ,x_n)=1$, and
\begin{equation}\label{compsymfn}
h_k(x_1,\ld ,x_n) = \sum_{\substack{i_1,\ld ,i_n\ge 0\\ i_1+\ld +i_n=k}}x_1^{i_1}\cd x_n^{i_n},\qquad k\ge 1.
\end{equation}
These are the \emph{Schur} symmetric function $s_{\la}(x_1,\ld ,x_n)$ in the case that the partition $\la$ has the single part $k$. In general, for
a partition $\la=(\la_1,\ld ,\la_n)$ with at most $n$ parts, the \emph{Schur} symmetric function $s_{\la}(x_1,\ld ,x_n)$ can be written as a ratio of determinants via 
\begin{equation}\label{ratioalt}
s_{\la}(x_1,\ld ,x_n)= \frac{\det \left( x_j^{\la_i+ n-i} \right)_{1\le i,j\le n}}{\det \left(  x_j^{n-i} \right)_{1\le i,j\le n}}.
\end{equation}

\begin{proposition}\label{pfcnhm}
For $n\ge 1$, we have
\begin{equation*}
\sum_{i=1}^n x_i^{k+n-1} \prod_{\substack{j=1\\j\ne i}}^n \frac{1}{x_i-x_j} = 
\begin{cases}
h_{k} (x_1,\ld ,x_n) ,\quad k\ge 0,\\
0, \qquad \qquad -(n-1) \le k \le -1.
\end{cases}
\end{equation*}
\end{proposition}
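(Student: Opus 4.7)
My plan is to recognize the left-hand side as a sum of residues on $\bbP^1$ and then extract its value from a Laurent expansion at infinity.

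First I would introduce the rational function $R(z)=z^{k+n-1}\big/\prod_{j=1}^n(z-x_j)$. Its only finite singularities are simple poles at $z=x_i$, with residues $x_i^{k+n-1}\big/\prod_{j\ne i}(x_i-x_j)$, so the left-hand side of the proposition is precisely $\sum_{i=1}^n\operatorname{Res}_{z=x_i}R(z)$. The standard vanishing of the total residue of a rational function on $\bbP^1$ then identifies this sum with the coefficient of $z^{-1}$ in the Laurent expansion of $R$ at $z=\infty$.

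Next I would compute that coefficient directly by expanding
\[
R(z)\;=\;z^{k-1}\prod_{j=1}^n\!\left(1-\frac{x_j}{z}\right)^{-1}\;=\;\sum_{\ell\ge 0}h_\ell(x_1,\ld ,x_n)\,z^{k-1-\ell},
\]
which is just the generating-function identity $\prod_j(1-x_jt)^{-1}=\sum_{\ell\ge 0}h_\ell(x)\,t^\ell$ with $t=1/z$. Reading off the coefficient of $z^{-1}$ forces $\ell=k$, giving $h_k(x_1,\ld ,x_n)$ when $k\ge 0$ and $0$ when $k<0$ (since no $\ell\ge 0$ is admissible). Equating the two expressions yields both cases of the proposition; the lower bound $k\ge -(n-1)$ in the vanishing case just guarantees that $z^{k+n-1}$ is a genuine polynomial so that the residue formulation is unambiguous.

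I do not anticipate any real obstacle here. One could equally well avoid the residue-at-infinity language and prove the same identity by a direct partial-fraction decomposition of $R(z)$, comparing the polynomial and strictly-rational parts; this is essentially the classical Lagrange interpolation argument, and it sidesteps the bialternant formula~(\ref{ratioalt}) entirely.
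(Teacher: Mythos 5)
Your argument is correct, and it takes a genuinely different route from the paper. The paper proves the identity by specializing the bialternant formula~(\ref{ratioalt}) to $\la=(k)$: the denominator is the Vandermonde product, the numerator determinant is expanded by cofactors along its first row to produce the left-hand sum, and the vanishing for $-(n-1)\le k\le -1$ comes from the numerator determinant having two equal rows. You instead read the left-hand side as the sum of the finite residues of $R(z)=z^{k+n-1}/\prod_j(z-x_j)$, convert it via the total-residue theorem on $\bbP^1$ to the coefficient of $z^{-1}$ in the expansion at infinity, and extract that coefficient from the generating function $\prod_j(1-x_jt)^{-1}=\sum_{\ell\ge 0}h_\ell t^\ell$; equivalently, as you note, this is the classical partial-fraction/Lagrange-interpolation computation. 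Both treatments implicitly work with distinct $x_i$, which is harmless since the claim is an identity of rational (in fact polynomial) functions. What your route buys is self-containedness: it needs only the defining generating function of the $h_k$, not the bialternant identity $s_{(k)}=h_k$ as a ratio of determinants, and it handles the two cases ($k\ge 0$ versus $-(n-1)\le k\le -1$) uniformly as ``which power of $z$ survives,'' with the lower bound on $k$ entering exactly where it should, namely to rule out an extra pole at $z=0$. The paper's route, on the other hand, stays entirely inside the determinantal formalism it has already set up for Schur functions, which is why it is the natural choice there.
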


\begin{proof}
The determinant in the denominator of~(\ref{ratioalt}) is the \emph{Vandermonde} determinant, which can be evaluated as the simple product
\[ \det \left(  x_j^{n-i} \right)_{1\le i,j\le n}=\prod_{1\le i< j\le n} (x_i-x_j). \]
Now in the case that the partition $\la$ has the single part $k$, we obtain from~(\ref{ratioalt}) that
\[ h_k(x_1,\ld ,x_n) = \frac{\det \left( x_j^{k\de_{i,1} +n - i} \right)_{1\le i,j\le n}}{\det \left(  x_j^{n-i} \right)_{1\le i,j\le n}} = \sum_{i=1}^n x_i^{k+n-1} \prod_{\substack{j=1\\j\ne i}}^n \frac{1}{x_i-x_j},   \]
where the second equality follows from the cofactor expansion of the numerator determinant in row $1$, and the Vandermonde determinant evaluation given above. The result follows immediately for $k\ge 0$. For $k= 1- m$ and $m$ between $2$ and $n$ inclusive, note that the numerator determinant has row $1$ equal to row $m$, and thus has value $0$, giving the result for $-(n-1) \le k \le -1$.
\end{proof}

For a set of positive integers $\al=\{ \al_1,\ld ,\al_n\}$ of size $n$, $n\ge 1$, and nonnegative integers $a_1,\ld ,a_n$, we define the symmetrization action
\[ \Th_{\al} \; p_{a_1}\cd p_{a_n} = \sum_{\si \in \cS_n} x_{\al_1}^{a_{\si(1)}}\cd x_{\al_n}^{a_{\si(n)}},\]
extended linearly to all polynomials of total degree at most $n$ in $\{ p_i \}_{i\ge 1}$ (where we set $p_0=1$). We will use~$\Th$ to prove that two power series are identical via the following result.

\begin{proposition}\label{identsym}
If $P_1$ and $P_2$ are polynomials of degree at most $n$ in $\{ p_i y^i \}_{i\ge 1}$, for any formal power series $y$ with $0$ constant term, and $\Th_{[n]}\; P_1 = \Th_{[n]}\; P_2$, then $P_1=P_2$.
\end{proposition}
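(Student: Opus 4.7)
The plan is to exploit the fact that on the subspace of polynomials of degree at most $n$ in $\{p_i\}_{i\ge 1}$, the operator $\Th_{[n]}$ sends each monomial $p_{\al}$ (with $l(\al)\le n$) to a nonzero scalar multiple of the monomial symmetric function $m_{\al}(x_1,\ld ,x_n)$, and these monomial symmetric functions are linearly independent in $\bbR[x_1,\ld ,x_n]$. This reduces the proposition to extracting coefficients.

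First I would write $Q = P_1 - P_2$ in the monomial basis
\[ Q = \sum_{\al :\, l(\al) \le n} d_{\al} \, p_{\al} \, y^{|\al|}, \]
so that the goal becomes showing that every $d_{\al}=0$. Second, I would compute $\Th_{[n]} p_{\al}$ directly. For a partition $\al$ with $k=l(\al)\le n$, padding with $n-k$ zeros produces a tuple $\al^*$ of length $n$ whose stabilizer under the $\cS_n$-action on positions has order $(n-k)!\prod_{i\ge 1} m_i(\al)!$, where $m_i(\al)$ is the multiplicity of $i$ as a part of $\al$. A routine orbit-stabilizer count then gives
\[ \Th_{[n]} p_{\al} \;=\; (n-k)! \prod_{i\ge 1} m_i(\al)! \;\; m_{\al}(x_1,\ld ,x_n), \]
a positive scalar multiple of $m_{\al}$.

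Combining these observations, the hypothesis $\Th_{[n]} Q = 0$ reads
\[ \sum_{\al :\, l(\al) \le n} d_{\al}\, (n-l(\al))! \prod_{i\ge 1} m_i(\al)!\; m_{\al}(x_1,\ld ,x_n)\, y^{|\al|} \;=\; 0. \]
By linear independence of $\{m_{\al} : l(\al)\le n\}$ in $\bbR[x_1,\ld ,x_n]$ (extended over the coefficient ring in which $y$ lives), each coefficient in the sum must vanish, and since the scalar factor is positive and $y$ is a nonzero formal power series (so $y^{|\al|}\ne 0$), this forces $d_{\al}=0$ for every $\al$, whence $P_1=P_2$. The only nontrivial step is the orbit-stabilizer bookkeeping used to identify $\Th_{[n]} p_{\al}$ with a nonzero multiple of $m_{\al}$, but this is routine; there is no substantive obstacle.
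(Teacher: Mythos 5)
Your argument is correct. The paper actually states Proposition~\ref{identsym} without proof, so there is nothing to compare against; your route is the natural one and fills the gap. The key computation, that for a partition $\al$ with $k=l(\al)\le n$ one has $\Th_{[n]}\,p_{\al}=(n-k)!\prod_{i\ge 1}m_i(\al)!\;m_{\al}(x_1,\ld ,x_n)$ (by padding with $p_0=1$ and counting the stabilizer of the padded exponent tuple), is right, and the linear independence of the monomial symmetric functions $m_{\al}(x_1,\ld ,x_n)$ over partitions with at most $n$ parts, taken over the coefficient ring containing $y$ (an integral domain of formal power series), then forces every $d_{\al}$ to vanish. The only caveat worth flagging is one you already noticed implicitly: the hypothesis ``$y$ has $0$ constant term'' does not by itself exclude $y=0$, and your final step needs $y\ne 0$ so that $y^{|\al|}\ne 0$; this is a looseness in the statement rather than in your proof, since in the paper's applications $y$ is a specific nonzero series such as $s$ or $w$.
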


The following result  that relates~$\Th$ to the complete symmetric functions via Proposition~\ref{pfcnhm} is applied later in the paper to determine the generating series for both Hurwitz numbers and $m$-hypermap numbers in genus $0$.

\begin{lemma}\label{umbcomp}
\begin{enumerate}
\item
Consider formal power series $f(u)$ and $g(u)$, in which the coefficients are constant and linear, respectively, in the $\{p_is^i\}_{i\ge 1}$. Then we have
\begin{align*}
\Th_{[n]} \; \sum_{j=1}^n \sum_{\ell\ge 0} \sum_{\substack{a_1,\ld ,a_j\ge 1\\a_1+\ld +a_j=\ell+1}} & \!\!\!\!\!\! \frac{p_{a_1}\cd p_{a_j}}{j!} s^{\ell+1}[u^{\ell}]  \left. f(u) \, \frac{g(u)^{n-j}}{(n-j)!} \right|_{s=1} \\
&= \sum_{i=1} ^n x_i f(x_i) \prod_{\substack{j=1\\j\ne i}}^n \left( \frac{x_j}{x_i - x_j} + \left. \Th_{\{ j \} } g(x_i) \right|_{s=1} \right) .
\end{align*}
\item
Consider a formal power series $g(u)$ in which the coefficients are linear in the $\{p_is^i\}_{i\ge 1}$. Then we have
\begin{align*}
 \Th_{[n]} \; \sum_{k=1}^n \frac{1}{(n-k)!} \sum_{j=1}^k  \sum_{\ell\ge 0} \sum_{\substack{a_1,\ld ,a_j\ge 1\\a_1+\ld +a_j=\ell+1}} & \!\!\!\!\!\! \frac{p_{a_1}\cd p_{a_j}}{j!} s^{\ell+1}[u^{\ell}]  \left. \frac{g(u)^{k-j}}{(k-j)!} \right|_{s=1} \\
&= \sum_{i=1} ^n x_i \prod_{\substack{j=1\\j\ne i}}^n \left( 1 +  \frac{x_j}{x_i - x_j} + \left. \Th_{\{ j \} } g(x_i) \right|_{s=1} \right) .
\end{align*}
\end{enumerate}
\end{lemma}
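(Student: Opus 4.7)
The plan is to prove both parts in parallel: apply $\Th_{[n]}$ directly to the LHS, and reshape the RHS using Proposition~\ref{pfcnhm} (the Lagrange-interpolation formula for $h_k$), matching the two sides term-by-term.

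First I would compute $\Th_{[n]}$ on the LHS. Writing $g(u) = \sum_{i \ge 1} h_i(u)\,p_i s^i$ (by the linearity hypothesis), the polynomial $p_{a_1}\cdots p_{a_j}\cdot g(u)^m$ has total $p$-degree $j+m$, equal to $n$ in Part (1) ($m=n-j$) and to $k\le n$ in Part (2) ($m=k-j$); in Part (2), the definition of $\Th_{[n]}$ extends the polynomial by $n-k$ factors of $p_0=1$, producing a symmetry factor $(n-k)!$ that cancels the external $1/(n-k)!$. Splitting each application of $\Th_{[n]}$ by how the $n$ (resp.\ $k$) positions are distributed between the $p_{a_r}$'s and the $g$-factors, and writing $G_k(u) := \Th_{\{k\}} g(u)|_{s=1}$, one obtains at $s=1$
\[ \Th_{[n]}\!\left(\frac{p_{a_1}\cdots p_{a_j}}{j!}\cdot\frac{g(u)^{m}}{m!}\right)\bigg|_{s=1} \;=\; \frac{1}{j!}\sum_{I}\,x_{i_1}^{a_1}\cdots x_{i_j}^{a_j}\,\Xi_I(u), \]
summed over ordered $j$-tuples $I=(i_1,\ld,i_j)$ of distinct elements of $[n]$, with $\Xi_I(u) = \prod_{k\notin I} G_k(u)$ in Part (1); in Part (2) the outer $\sum_k$ further collapses $\sum_{\substack{S'\subseteq [n]\setminus I\\|S'|=k-j}} \prod_{k'\in S'} G_{k'}(u)$ into $\prod_{k\notin I}(1+G_k(u))$. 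The factor $s^{\ell+1}[u^{\ell}]$ with $\ell+1=\sum a_r$ reduces at $s=1$ to $[u^{\sum a_r-1}]$.

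Second, I would reshape the RHS by expanding the product $\prod_{j\ne i}$ in its two (Part 1) or three (Part 2) choices; let $J\subseteq[n]\setminus\{i\}$ index the $G_j(x_i)$ choices and $K = [n]\setminus(J\cup\{i\})$ its complement. In Part (1), combining $x_i$ with $\prod_{k\in K} x_k$ yields $\prod_{k\in T} x_k$ for $T = K\cup\{i\}$; in Part (2), using $1 + \tfrac{x_j}{x_i-x_j} = \tfrac{x_i}{x_i-x_j}$ collapses the three-term factor to two choices and produces $x_i^{|T|}$ after combining with the external $x_i$. Swapping the sum over $(i, J)$ for $\sum_T \sum_{i\in T}$ (with $J = [n]\setminus T$), Proposition~\ref{pfcnhm}, applied to the $|T|$-element set $\{x_k : k\in T\}$, evaluates $\sum_{i\in T}(\text{polynomial in }x_i)\prod_{k\in T\setminus\{i\}}(x_i-x_k)^{-1}$ in terms of the complete symmetric function $h_{m+1}(\{x_k:k\in T\}) = \sum_{\beta:T\to\bbZ_{\ge 0},\,|\beta|=m+1}\prod x_k^{\beta_k}$.

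Finally I would match the two sides. In Part (1), the extra factor $\prod_{k\in T} x_k$ forces $\beta_k\ge 1$, so the substitution $a_k = \beta_k+1$ (with $|\beta|+|T|=\sum a_k$) plus the conversion of subsets $T$ of size $j$ to ordered $j$-tuples (absorbing a $1/j!$) reproduces the LHS. In Part (2) one splits $T = T_+ \sqcup R$ with $T_+ = \{k\in T : \beta_k\ge 1\}$; then summing over $R\subseteq [n]\setminus T_+$ of $G_{[n]\setminus(T_+\sqcup R)}$ collapses, by inclusion-exclusion, to $\prod_{k\in[n]\setminus T_+}(1+G_k(y))$, which exactly matches the factor on the LHS, with $T_+$ playing the role of $I$. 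The hard part will be this last bookkeeping: the LHS is indexed by ordered $j$-tuples $I$ with strictly positive exponents $a_r$, while the RHS is indexed by subsets $T$ with a distinguished $i\in T$ and nonnegative exponents $\beta_k$, and Part (2) is more delicate because the $\beta_k=0$ positions pair with the ``$1$'' term of $1+G_k$, requiring the described merging of the $R$-sum into the $\prod_{k\in[n]\setminus T_+}(1+G_k)$ factor.
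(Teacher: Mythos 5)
Your argument is correct and is essentially the paper's own proof: both rest on splitting $\Th_{[n]}$ according to which variables receive the $p_{a_r}$'s, which receive $g$-factors (and, in part (2), which receive $p_0=1$'s so that the $(n-k)!$ cancels), and both invoke Proposition~\ref{pfcnhm} -- including its vanishing cases -- to pass between complete symmetric functions in a subset of the variables and the Lagrange-interpolation sums $\sum_i x_i^{\ell}\prod_{r\ne i}(x_i-x_r)^{-1}$. The only difference is organizational: the paper transforms the left side directly into the right side via the intermediate identity $\sum_{\ell,a}(\Th_{[j]}p_{a_1}\cdots p_{a_j})[u^{\ell}]b(u)=j!\sum_i x_i b(x_i)\prod_{r\ne i}\tfrac{x_r}{x_i-x_r}$, whereas you expand both sides to a common form indexed by subsets $T$ and exponent vectors and match there (and your appeal to ``inclusion-exclusion'' in part (2) is really just the expansion of $\prod(1+G_k)$, but the bookkeeping is sound).
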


\begin{proof}
For part (1), let $b(u)=\sum_{\ell\ge 0} b_{\ell} u^{\ell}$ be a formal power series independent of $\{p_is^i\}_{i\ge 1}$, and note that
\begin{align*}
& \sum_{\ell\ge 0} \sum_{\substack{a_1,\ld ,a_j\ge 1\\a_1+\ld +a_j =\ell+1}} \!\!\!\!\!\! \Bigg( \Th_{[j]} p_{a_1}\cd p_{a_j} \Bigg) \left. \!\! [u^{\ell}] b(u) \right|_{s=1} = \; j! \sum_{\ell\ge 0}b_{\ell} \,  x_1 \cd x_j \, h_{\ell + 1 - j}(x_1,\ld ,x_j) \\
& = j! \sum_{\ell\ge 0}b_{\ell} \,  x_1 \cd x_j \sum_{i=1}^j x_i^{\ell} \prod_{\substack{r=1\\r \ne i}}^n \frac{1}{x_i-x_r} = \; j! \sum_{i=1}^j x_i b(x_i) \prod_{\substack{r=1\\r \ne i}}^n \frac{x_r}{x_i-x_r},
\end{align*}
where the second last equality is obtained from Proposition~\ref{identsym} with $n=j$, and $k= \ell + 1 - j$. Thus from the above we obtain
\begin{align*}
LHS &= \sum_{j=1}^n \sum_{\substack{\al \sqcup \be =[n]\\ | \al | = j \\ | \be | = n - j}}  \sum_{\ell\ge 0} \!\! \sum_{\substack{a_1,\ld ,a_j\ge 1\\a_1+\ld +a_j=\ell+1}} \!\!\!\!\!\! \Bigg( \Th_{ \al } \frac{ p_{a_1}\cd p_{a_j}}{j!} \Bigg) s^{\ell+1}[u^{\ell}]  \left. f(u) \Bigg( \Th_{ \be } \frac{g(u)^{n-j} }{(n-j)!}\Bigg) \right|_{s=1} \\
& = \sum_{\substack{\al \sqcup \be =[n]\\ | \al | \ge 1 \\ | \be | \ge 0 }} x_i f(x_i) \Bigg(  \prod_{\substack{r \in \al \\ r \ne i}}^n \frac{x_r}{x_i-x_r}  \Bigg)  \Bigg(  \prod_{b \in \be}^n \Th_{ \{ j \} } \left.  g(x_i) \right|_{s=1} \Bigg) \\
& = \sum_{i=1}^n x_i f(x_i) \!\!\!\! \sum_{\ga \sqcup \be =[n]} \!\! \Bigg(  \prod_{ r \in \ga }^n \frac{x_r}{x_i-x_r}  \Bigg)  \Bigg(  \prod_{b \in \be}^n \Th_{ \{ j \} } \left.  g(x_i) \right|_{s=1} \Bigg) ,
\end{align*}
where for the last equality we have reordered the summations and changed the summation variable $\al$ to $\ga=\al\setminus i$, and for the second last equality, we have applied part~(1) of the result. Part~(2) of the result follows immediately.
\vspace{.05in}

For part (2), we have
\begin{align*}
LHS &= \sum_{k=1}^n \sum_{\substack{\al \sqcup \be =[n]\\ | \al | = n - k\\ | \be | = k}}  \Bigg( \Th_{ \al } \frac{1}{(n-k)!} \Bigg) \Bigg( \Th_{\be} \sum_{j=1}^k \sum_{\ell\ge 0} \!\! \sum_{\substack{a_1,\ld ,a_j\ge 1\\a_1+\ld +a_j=\ell+1}} \!\!\!\!\!\! \frac{ p_{a_1}\cd p_{a_j}}{j!} s^{\ell+1}[u^{\ell}]  \left. \frac{g(u)^{n-j} }{(n-j)!} \right|_{s=1} \Bigg) \\
&= \sum_{\substack{\al \sqcup \be =[n]\\ | \al | \ge 0 \\ | \be | \ge 1}} \Bigg( \prod_{a \in \al} 1 \Bigg)  \sum_{i \in \be} x_i \prod_{b \in \be} \left( \frac{x_j}{x_i - x_j} + \left. \Th_{\{ b \} } g(x_i) \right|_{s=1} \right) \\
&= \sum_{i=1}^n x_i \sum_{\al \sqcup \ga =[n]} \Bigg( \prod_{a \in \al} 1 \Bigg) \prod_{r \in \ga} \left( \frac{x_j}{x_i - x_j} + \left. \Th_{\{ b \} } g(x_i) \right|_{s=1} \right) ,
\end{align*}
where for the last equality we have reordered the summations and changed the summation variable $\be$ to $\ga=\be\setminus i$. The result follows immediately.  
\end{proof}
 
\section{Applications of the content series in genus $0$}\label{s9}

In this section, we apply Theorem~\ref{genus0pde} for the three special cases of the content series identified in Section~\ref{s4}, to obtain new and uniform proofs for the explicit numbers of branched covers in genus $0$. In each case we give an {\em algebraic} proof that the generating series for the appropriate numbers satisfies the genus $0$ pde given by Theorem~\ref{genus0pde}, and that it has the appropriate initial condition (the latter is immediate in each case).

However, in Section~\ref{s6} we have given a {\em combinatorial} proof for the genus $0$ portion of the $\cU$ operators, in terms of multiplication by the Jucys-Murphy element $J_{n+1}$, after inserting a fixed point to move from $\cS_n$ to $\cS_{n+1}$. This gives an underlying combinatorial flavour to our algebraic proof. In the case of Hurwitz numbers, this provides a significant contrast with the proof given in~\cite{gj1}, where a much simpler genus $0$ pde (the join-cut equation) was obtained via a combinatorial analysis of the effect on disjoint cycle lengths when multiplying  within $\cS_n$ by the set of transpositions. In the case of $m$-hypermap numbers, a similar proof could be obtained if we were able to carry out the combinatorial analysis of the effect on disjoint cycle lengths when multiplying within $\cS_n$ by the set of all permutations. We have been unable to carry out this analysis in general, and thus have been unable to obtain such a proof for the genus $0$ $m$-hypermap numbers (but for the case of multiplication by a cycle of arbitrary length, together with fixed points, see~\cite{gj4}). Thus, by moving from~$\cS_n$ to~$\cS_{n+1}$, and {\em not} remaining within $\cS_n$, it seems that we have obtained significant underlying combinatorial advantage in this paper, and we believe that further work along these lines would be worthwhile.
 
\subsection{Hurwitz numbers in genus $0$}\label{s91}

Define the series
\begin{equation}\label{H0explicit}
G = G(z,\bp) = \sum_{n\ge 1} z^n \sum_{\al\vdash n} \frac{p_{\al}}{| \Aut \al |} \, n^{l(\al)-3} \prod_{j=1}^{l(\al)} \frac{\al_j^{\al_j}}{\al_j!} ,
\end{equation}
and recall from Section~\ref{s4} that $\Hu_0(1,z,\bfp)$ is the generating series for Hurwitz numbers in genus $0$. Then it has previously been established (e.g., see~\cite{gj1},~\cite{h}) that $\Hu_0 (1,z,\bp)=G(z,\bp)$. We now give a new proof of this result by applying Theorem~\ref{genus0pde}. To do so, note that from Theorem~\ref{genus0pde} and~(\ref{caseHu}), the generating series $\Hu_0(1,z,\bfp) = \whPsi^{e^x}_0$ satisfies the partial differential equation
\begin{equation}\label{hitzpde}
A(\Hu_0 (1,z,\bp))=0
\end{equation}
where
\begin{align*}
A(F)&= z\sum_{k\ge 1} \frac{1}{(k+1)!} \sum_{j=1}^k \binom{k+1}{j} \sum_{\ell\ge 0}\sum_{\substack{a_1,\ld ,a_j\ge 1\\a_1+\ld +a_j=\ell+1}} \!\!\!\!\!\! p_{a_1}\cd p_{a_j} [u^{\ell}] \left(  \sum_{i\ge 1}(\pp_i F) u^i  \right)^{k+1-j} \!\!\!\!\!\!\!\!\!\!\!\! + p_1 z - \sum_{i\ge 1}p_i \pp_i F.
\end{align*}

Now let $s$ be given by the functional equation $s=z\exp \phi_0(s)$, where $\phi_i(s) =\sum_{j\ge 1}\frac{j^{j+i}}{j!}p_j s^j$, for any integer~$i$. In \cite{gj1} we proved that
\[  \pp_i G =\frac{i^{i-1}}{i!} s^i - \frac{i^{i}}{i!} s^i \sum_{j\ge 1} \frac{j^{j+1}}{j!}p_js^j\frac{1}{i+j}, \]
and thus we have
\begin{equation}\label{ppG}
\sum_{i\ge 1}p_i \pp_i G = \phi_{-1}(s) - \tfrac{1}{2} \phi_0(s)^2, \qquad\qquad\qquad \sum_{i\ge 1}(\pp_i G) u^i = T(us) - S(us),
\end{equation}
where $T(x)=\sum_{i\ge 1}\frac{i^{i-1}}{i!} x^i$ and $S(x)= \sum_{i\ge 1} \frac{i^{i}}{i!} x^i \sum_{j\ge 1} \frac{j^{j+1}}{j!}p_js^j\frac{1}{i+j}$. Then it is well known that $i^{i-1}$ counts the number of rooted labelled trees on $i$ vertices, and that the \emph{tree series} $T(x)$ satisfies the functional equation $T=x e^T$. From the proof of Proposition~3.2 in~\cite{gj1}, we obtain immediately that
\begin{equation}\label{Gsum}
\Th_{\{ j\} } \left. S(u) \right|_{s=1} =  \frac{u}{u-x_j} - \frac{T(u)}{T(u)-T(x_j)} \frac{1}{1-T(x_j)},
\end{equation}
and using Lagrange's Implicit Function Theorem (see, e.g., Theorem 1.2.4, page 17, in~\cite{gj0}), we obtain
\begin{equation}\label{Thphi}
\Th_{\{ j\} } \phi_{-1}(1) =  T(x_j), \qquad\qquad\qquad \Th_{\{ j\} } \phi_0(1) =  \frac{T(x_j)}{1-T(x_j)}.
\end{equation}

Now to prove that $\Hu_0(1,z,\bp)=G$. From~(\ref{hitzpde}) and~(\ref{ppG}), we obtain
\begin{align}
e^{\phi_0(s)} A(G)&= s \sum_{k\ge 1} \frac{1}{(k+1)!} \sum_{j=1}^k \binom{k+1}{j} \sum_{\ell\ge 0}\sum_{\substack{a_1,\ld ,a_j\ge 1\\a_1+\ld +a_j=\ell+1}} \!\!\!\!\!\! p_{a_1}\cd p_{a_j} [u^{\ell}] \big( T(us) - S(us)\big)^{k+1-j} \label{Heqn} \\
& \qquad\qquad\qquad \big. + p_1 s - e^{\phi_0(s)} \left(  \phi_{-1}(s) - \tfrac{1}{2} \phi_0(s)^2 \right). \notag
\end{align}
This is a formal power series in $\{p_is^i\}_{i\ge 1}$ with no constant term, and the sum of terms of total degree $n$, $n\ge 2$, is given by
\begin{align*}
& \sum_{k\ge n-1} \sum_{j=1}^k \frac{\binom{k+1}{j}}{(k+1)!} \sum_{\ell\ge 0}\sum_{\substack{a_1,\ld ,a_j\ge 1\\a_1+\ld +a_j=\ell+1}} \!\!\!\!\!\! p_{a_1}\cd p_{a_j} s^{\ell +1} [u^{\ell}] \binom{k+1-j}{n-j} T(u)^{k+1-n} \left(- S(u) \right)^{n-j} \\
   & \qquad - \frac{1}{(n-1)!}\phi_0(s)^{n-1} \phi_{-1}(s) + \frac{1}{2(n-2)!} \phi_0(s)^n \\
&= \sum_{j=1}^n \frac{1}{j!(n-j)!} \sum_{\ell\ge 0}\sum_{\substack{a_1,\ld ,a_j\ge 1\\a_1+\ld +a_j=\ell+1}} \!\!\!\!\!\! p_{a_1}\cd p_{a_j} s^{\ell +1} [u^{\ell}]  \left(- S(u) \right)^{n-j} \sum_{i\ge0} \frac{T(u)^i}{i!} \\
   & \qquad - \frac{1}{(n-1)!}\phi_0(s)^{n-1} \phi_{-1}(s) + \frac{1}{2(n-2)!} \phi_0(s)^n ,
\end{align*}
where for the last equality, we have changed from summation variable $k$ to $i=k+1-n$, and reordered the sums. Now we set $s=1$ and apply $\Th_{[n]}$ to this expression, via Lemma~\ref{umbcomp},~(\ref{Gsum}) and~(\ref{Thphi}). Using the notation $T_i$ to denote $T(x_i)$, $i=1,\ld ,n$, this gives
\begin{align*}
& \sum_{i=1} ^n x_i e^{T_i} \prod_{\substack{j=1\\j\ne i}}^n \left( \frac{x_j}{x_i-x_j} -  \Th_{\{ j\} } \left. S(x_i) \right|_{s=1} \right)  
- \sum_{i=1}^n T_i \prod_{\substack{j=1\\j\ne i}}^n \frac{T_j}{1-T_j} + \binom{n}{2} \prod_{j=1}^n\frac{T_j}{1-T_j}
\end{align*}
\begin{align*}
&= \sum_{i=1} ^n T_i  \prod_{\substack{j=1\\j\ne i}}^n  \frac{T_j(1+T_i-T_j)}{(1-T_j)(T_i-T_j)} 
- \left( \prod_{j=1}^n\frac{T_j}{1-T_j} \right) \left( \sum_{i=1}^n (1-T_i ) - \binom{n}{2} \right)\\
&= \left( \prod_{j=1}^n\frac{T_j}{1-T_j} \right) \left( \sum_{i=1}^n (1-T_i )\left( \prod_{\substack{j=1\\j\ne i}}^n \left( \frac{1}{T_i - T_j }+ 1 \right) - 1  \right) + \binom{n}{2}  \right) \\
&= \left( \prod_{j=1}^n \frac{T_j}{1-T_j} \right) \left( \sum_{\substack{\al \subseteq [n]\\ | \al | \ge 2}} \sum_{i \in \al} (1-T_i )\prod_{j \in \al\setminus i} \frac{1}{T_i - T_j } + \binom{n}{2}  \right) \\
&= \left( \prod_{j=1}^n \frac{T_j}{1-T_j} \right) \left( 0 - \binom{n}{2} +\binom{n}{2}  \right) = 0,
\end{align*}
where for the second last equality, we have applied Proposition~\ref{pfcnhm} in the variables $T_i$, for $i$ in $\al$. This completes our treatment of the terms of total degree $n\ge 2$ in~(\ref{Heqn}). Now, the sum of terms of total degree $1$ in~(\ref{Heqn}) is given by
\begin{equation*}
\sum_{k\ge 1} \frac{1}{(k+1)!} (k+1) \sum_{\ell\ge 0} p_{\ell + 1} s^{\ell +1} [u^{\ell}] T(u)^{k} +p_1 s - \phi_{-1}(s),
\end{equation*}
and we set $s=1$ and apply $\Th_{[1]}$ to this expression via~(\ref{Thphi}), to obtain $\;x_1e^{T_1} - T_1 =0$. From Proposition~\ref{identsym}, we see that this completes the proof that $A(G) = 0$, and since $\Hu_0(1,0,\bfp)=G(0,\bp)=0$, that the generating series for Hurwitz numbers in genus $0$ is given by $\Hu_0(1,z,\bfp)=G(z,\bp)$. 

\subsection{$m$-Hypermap numbers in genus $0$}\label{s92}

Define the series
\begin{equation}\label{Phim0explicit}
Q = Q(z,\bp) = \sum_{n\ge 1} z^n \sum_{\al\vdash n} \frac{p_{\al}}{| \Aut \al |} \, m \frac{ \left( (m-1)n-1  \right) ! }{\left( (m-1)n - l(\al) +2  \right) !} \prod_{j=1}^{l(\al)} \binom{m\al_j - 1}{\al_j} ,
\end{equation}
and recall from Section~\ref{s4} that $\Hy^{(m)}_0(1,z,\bp)$ is the generating series for $m$-hypermap numbers in genus $0$. Then Bousquet-M\'elou and Schaeffer~\cite{bms} proved bijectively that $\Hy^{(m)}_0(1,z,\bp)=Q(z,\bp)$. We now prove this result algebraically, by applying Theorem~\ref{genus0pde}. To do so, note that from Theorem~\ref{genus0pde}, and~(\ref{caseHy}), the generating series $\Hy^{(m)}_0(1,z,\bfp) = \whPsi^{(1+x)^m}_0$ satisfies the partial differential equation
\begin{equation}\label{mhyppde}
B(\Hy^{(m)}_0(1,z,\bp))=0
\end{equation}
where
\begin{align*}
B(F)&= z\sum_{k = 1}^m \binom{m}{k} \frac{1}{k+1} \sum_{j=1}^k \binom{k+1}{j} \sum_{\ell\ge 0}\sum_{\substack{a_1,\ld ,a_j\ge 1\\a_1+\ld +a_j=\ell+1}} \!\!\!\!\!\! p_{a_1}\cd p_{a_j} [u^{\ell}] \left(  \sum_{i\ge 1}(\pp_i F) u^i  \right)^{k+1-j} \!\!\!\!\!\!\!\!\!\!\!\! + p_1 z - \sum_{i\ge 1}p_i \pp_i F.
\end{align*}

Now let $w$ be given by the functional equation $w=z \, (1 + \psi(w))^{m-1}$, where $\psi(w) =\sum_{j\ge 1} \binom{mj - 1}{j} p_j w^j$. Slightly adapting the proof of Theorem~2.1 in~\cite{gs} we obtain
\[  \pp_i Q = \left( 1+\psi(w) \right) \frac{1}{i} \binom{mi}{i-1}w^i - \binom{mi}{i} w^i \sum_{j\ge 1} j \binom{mj-1}{j} p_j w^j \frac{1}{i+j}, \]
and thus we have
\begin{equation}\label{ptimesppQ}
\sum_{i\ge 1}p_i \pp_i Q = \left( 1+\psi(w) \right) \xi(w) - \frac{m}{2(m-1)} \psi(w)^2,
\end{equation}
where $\xi(w) = \sum_{i\ge 1} \tfrac{1}{i} \binom{mi}{i-1} p_i w^i$ and
\begin{equation}\label{ppQ}
\sum_{i\ge 1}(\pp_i Q) u^i = \left( 1+\psi(w) \right) R(uw) - Y(uw),
\end{equation}
where $R(x)=\sum_{i\ge 1}\tfrac{1}{i} \binom{mi}{i-1} x^i$ and $Y(x)= \sum_{i\ge 1} \binom{mi}{i} x^i \sum_{j\ge 1} j \binom{mj-1}{j} p_j w^j \frac{1}{i+j}$. Then $\tfrac{1}{i} \binom{mi}{i-1}$ is a generalization of the \emph{Catalan} number (the latter is the case $m=2$) and its generating series $R(x)$ satisfies the functional equation $R=x (1+R)^m$. Adapting the proof of Proposition~3.2(2) in~\cite{gj1}, we obtain
\begin{equation}\label{Qsum}
\Th_{\{ j\} } \left. Y(u) \right|_{w=1} =  \frac{u}{u-x_j} - \frac{R(u)}{R(u)-R(x_j)} \frac{1 + R(x_j)}{1- (m-1)R(x_j)},
\end{equation}
and using Lagrange's Implicit Function Theorem (see, e.g., Theorem 1.2.4, page 17, in~\cite{gj0}), we obtain
\begin{equation}\label{Thxi}
\Th_{\{ j\} } \psi(1) =  \frac{(m-1)R(x_j)}{1 - (m-1)R(x_j)}  ,\qquad\qquad \Th_{\{ j\} } \xi(1) =  R(x_j).
\end{equation}

Now to prove that $\Hy^{(m)}_0(1,z,\bp)=Q$. From~(\ref{mhyppde}),~(\ref{ptimesppQ}) and~(\ref{ppQ}), we obtain
\begin{align*}
(&1 + \psi(s))^{m-1} B(Q)\\
&\qquad = w \sum_{k = 1} ^m \binom{m}{k} \frac{1}{k+1} \sum_{j=1}^k \binom{k+1} {j} \sum_{\ell\ge 0}\sum_{\substack{a_1,\ld ,a_j\ge 1\\a_1+\ld +a_j=\ell+1}} \!\!\!\!\!\! p_{a_1}\cd p_{a_j} [u^{\ell}] \Big( \left( 1+\psi(w) \right) R(u w) \Big. \\
& \qquad \qquad \Big. - Y(uw) \Big)^{k+1-j} +p_1 w - \left( 1+\psi(w) \right)^{m}  \xi(w) - \frac{m}{2(m-1)} \left( 1+\psi(w) \right)^{m-1} \psi(w)^2 \\
&\qquad = m! \sum_{k=1}^m \frac{1}{(m-k)!} \sum_{j=1}^k \frac{1}{j!(k+1-j)!}  \sum_{\ell\ge 0}\sum_{\substack{a_1,\ld ,a_j\ge 1\\a_1+\ld +a_j=\ell+1}} \!\!\!\!\!\! p_{a_1}\cd p_{a_j} w^{\ell +1} [u^{\ell}] \Big( \left( 1+\psi(1) \right) R(u)\Big. \\
& \qquad\qquad \Big. - Y(u) \Big)^{k+1-j} +p_1w - \left( 1+\psi(w) \right)^{m}  \xi(w) - \frac{m}{2(m-1)} \left( 1+\psi(w) \right)^{m-1} \psi(w)^2.
\end{align*}
But this is a polynomial in $\{p_i w^i\}_{i\ge 1}$ with all terms of total degree at most $m+1$. Thus we set $w=1$ and apply $\Th_{[m+1]}$, to this expression, via Lemma~\ref{umbcomp},~(\ref{Qsum}) and~(\ref{Thxi}). Using the notation $R_i$ to denote $R(x_i)$, $i=1,\ld ,m+1$, this gives
\begin{align*}
& m! \left( \sum_{i=1} ^{m+1} x_i \prod_{\substack{j=1\\j\ne i}}^{m+1} \left( 1+ \frac{x_j}{x_i-x_j} + \left( 1 + \Th_{\{ j\} } \psi(1)\right) R_i -  \Th_{\{ j\} } \left. Y(x_i)\right|_{w=1} \right) \right. \\
& \qquad - \sum_{i=1}^{m+1} x_i \prod_{\substack{j=1\\j\ne i}}^{m+1}  \left(  1+ \frac{x_j}{x_i-x_j} \right) + h_1(x_1,\ld ,x_{m+1}) \\
& \qquad - \sum_{i=1}^{m+1}   R_i   \prod_{\substack{j=1\\j\ne i}}^{m+1}  \frac{1}{1 - (m-1)R_j}  
+ \left. (m-1) \!\! \left( \prod_{j=1}^{m+1}  \frac{1}{1 - (m-1)R_j} \right) \!\! \sum_{1\le i < n\le m+1} \!\!\!\!\!\! R_i R_n \right) \\
& = m! \left( \sum_{i=1} ^{m+1} x_i \prod_{\substack{j=1\\j\ne i}}^{m+1} \frac{R_i (1 + R_i)}{(1 - (m-1)R_j)(R_i - R_j)} \right. - \sum_{i=1}^{m+1} x_i^{m+1} \prod_{\substack{j=1\\j\ne i}}^{m+1} \frac{1}{x_i-x_j} + h_1(x_1, \ld ,x_{m+1}) \\
& \qquad -  \left. \left( \prod_{j=1}^{m+1}  \frac{1}{1 - (m-1)R_j} \right) \left( \sum_{i=1}^{m+1} R_i (1 - (m-1)R_i) -  (m-1) \!\!\!\!\!\! \sum_{1\le i < n\le m+1} \!\!\!\!\!\! R_i R_n \right)  \right)
\end{align*}
\begin{align*}
&= m! \left( \prod_{j=1}^{m+1} \frac{1}{1 - (m-1) R_j} \right) \Bigg( \sum_{i=1}^{m+1} ( R_i^{m+1} - (m-1) R_i^{m+2}) \prod_{\substack{j=1\\j\ne i}}^{m+1} \frac{1}{R_i - R_j } \Bigg. \\
& \qquad \qquad \qquad \qquad \qquad \qquad \qquad \Bigg. - h_1(R_1, \ld ,R_{m+1}) + (m-1) h_2(R_1, \ld ,R_{m+1}) \Bigg) = 0,
\end{align*}
where for the second last equality, we have applied Proposition~\ref{pfcnhm}, and for the last equality, we have applied Proposition~\ref{pfcnhm}, but in the variables $R_i$, for $i=1,\ld ,m+1$. From Proposition~\ref{identsym}, we see that this completes the proof that $B(Q) = 0$, and since $\Hy_0(1,0,\bfp)=Q(0,\bp)=0$, that the generating series for $m$-hypermap numbers in genus $0$ is given by $\Hy_0(1,z,\bfp)=Q(z,\bp)$. 

Recently, and independently, Fang~\cite{fang} proved that $\Hy_0(1,z,\bfp)$ satisfies this same partial differential equation $B(\Hy_0(1,z,\bfp))=0$, via a combinatorial analysis of the corresponding constellations. He gives an algebraic proof for $B(Q)=0$ that is quite different from our proof. In addition, Fang gives a partial differential equation for $m$-hypermap numbers in arbitrary genus.
 
\subsection{Monotone Hurwitz numbers in genus $0$}\label{s93}

{}From ~(\ref{casevecHu}), we can apply Theorem~\ref{Phif_gpde} with $f(x)=1$ and $g(x)=1-x$ for the generating series $\vec{\Hu} = \vec{\Hu}(y,z,\bp)= \Phi^{(1-x)^{-1}}$ for monotone Hurwitz numbers, and thus obtain the partial differential equation
\begin{equation}\label{monohurpde}
\cU_0 \, e^{\vec{\Hu}} = z^{-1} \left( \cC_0 - y \, \cC_1  \right) \, e^{\vec{\Hu}}.
\end{equation}
Now $\cU_0=p_1$, and from~(\ref{transcotrans}) we have $\cC_0 = z\frac{\partial}{\partial z}$, $\cC_1= 2\De$. Thus, adapting slightly the proof of Theorem~\ref{genus0pde}, we see that the generating series $\vec{\Hu}_0 = \vec{\Hu}_0(y,z,\bfp) $ for monotone Hurwitz numbers in genus $0$ satisfies the partial differential equation
\[ \frac{1}{2y} \left( z\frac{\partial}{\partial z} \vec{\Hu}_0 - zp_1 \right)  = \tfrac{1}{2} \sum_{i,j \ge 1} \left(  p_i p_j \pp_{i+j} \vec{\Hu}_0 + p_{i+j} \left( \pp_i \vec{\Hu}_0 \right) \left( \pp_j \vec{\Hu}_0 \right) \right). \]
But this equation for $\vec{\Hu}_0$ was given in~\cite{ggn1} (it follows immediately from Theorem 1.2 of that paper). The equation is called the {\em join-cut} equation, and it was obtained in~\cite{ggn1} by a combinatorial analysis of multiplication by a transposition factor. In~\cite{ggn1}, it was proved that
\begin{equation*}
\vec{\Hu}_0(1,z,\bp)  = \sum_{n\ge 1} z^n \sum_{\al\vdash n} \frac{p_{\al}}{| \Aut \al |} \, \frac{ \left( 2n+l(\al)-3 \right) ! }{\left( 2n \right) !} \prod_{j=1}^{l(\al)} \binom{2\al_j}{\al_j} ,
\end{equation*}
and we won't repeat such a proof here.

\section*{Acknowledgements}

We thank Guillaume Chapuy for pointing out reference~\cite{fang}, Valentin F\'eray for pointing out reference~\cite{ok}, and Mathieu Guay-Paquet for helpful discussions,.

\bibliographystyle{amsplain}

\end{document}